\newtheorem{theorem}{Theorem}[section]
\newtheorem{lemma}{Lemma}[section]
\newtheorem{corollary}{Corollary}[section]
\theoremstyle{definition}
\DeclareMathOperator{\Deg}{deg}
\newcommand{\ming}{%
  \min \{ \Deg_G(u), \Deg_G(v) \}%
}
\newcommand{\mingen}[3]{%
  \min \{ \Deg_{#1}(#2), \Deg_{#1}(#3) \}%
}
\newcommand{\degr}[2]{%
  \Deg_{#1}(#2)%
}
\newlist{theoremenum}{enumerate}{1}
\setlist[theoremenum,1]{label=(\thetheorem.\arabic*), ref=\thetheorem.\arabic*}
\newcounter{cases}
\newcounter{subcases}[cases]
\title{Ideally Connected Cographs and Chordal Graphs}
\author[1]{Richter Jordaan\thanks{Georgia Institute of Technology, Atlanta, GA 30332 \\ 
Email: \href{http://www.overleaf.com}{{rjordaan3@gatech.edu}}}}
\date{\vspace{-7ex}}
\begin{document}
\maketitle

 \begin{abstract} For distinct vertices $u,v$ in a graph $G$, let $\kappa_G(u,v)$ denote the maximum number of internally disjoint $u$-$v$ paths in $G$. Then, $\kappa_G(u,v) \leq \min\{ \mbox{deg}_G(u), \mbox{deg}_G(v) \}$. If equality is attained for every pair of vertices in $G$, then $G$ is called \textit{ideally connected}. In this paper, we characterize the ideally connected graphs in two well-known graph classes: the cographs and the chordal graphs. We show that the ideally connected cographs are precisely the $2K_2$-free cographs, and the ideally connected chordal graphs are precisely the threshold graphs, the graphs that can be constructed from the single-vertex graph by repeatedly adding either an isolated vertex or a dominating vertex.
 \end{abstract}

\section{Introduction}
\subsection{Background}
The most well-known measure of the connectedness of a graph $G$  is its vertex connectivity $\kappa(G)$, which is
defined as the minimum size of a vertex subset whose removal disconnects $G$ (unless $G$ is isomorphic to a complete graph, in which case $\kappa(G)=|V(G)|-1)$. Menger's theorem shows that $\kappa(G)$ is equal to the maximum number of internally disjoint paths that can always be found between any pair of vertices in $G$. For distinct vertices $u,v \in V(G)$, we let $\kappa_G(u,v)$ denote the maximum number of internally disjoint $u$-$v$ paths in $G$, 
so Menger's theorem can be formulated as \[ \kappa(G) = \min_{ u,v \in V(G)}{\kappa_G(u,v)} \]
Then, for distinct vertices $u,v \in V(G)$, we have $\kappa(G) \leq \kappa_G(u,v) \leq \ming$. If $\kappa_G(u,v) = \ming$ for every pair of vertices $u,v \in V(G)$, we say that $G$ is \textit{ideally connected}.

Ideal connectedness has arisen independently, and under different names, in work on various properties and quantities reflecting a graph's connectivity. Beineke, Oellermann, and Pippert \cite{Beineke2002} introduced the average connectivity parameter, which for a graph $G$ is defined as the average (as opposed to the minimum) of $\kappa_G(u,v)$ among all pairs of vertices in $G$. Whereas the standard vertex connectivity measures the ``worst case" scenario of $\kappa_G(u,v)$ among all pairs of vertices, the average connectivity can be a more global and more refined measure of a graph's overall connectedness in applications, particularly in network modeling \cite{Bagga1993}. Ideal connectedness is the ``best case" outcome for the average connectivity of a graph, given its degree sequence. It turns out that ideally connected graphs frequently arise as the extremal examples for average connectivity under certain further conditions. For instance, Beineke, Oellermann, and Pippert \cite{Beineke2002} showed that among all graphs with a fixed number of vertices and edges, the ones of largest average connectivity are ideally connected; for other examples, see 
 \cite{Casablanca2021,Dankelmann2003,Mol2021}. Hence, it is of interest to study the structure of ideally connected graphs themselves, since in some sense this amounts to studying the structure of graphs with largest average connectivity.

Hellwig and Volkmann \cite{Hellwig2006} studied ideally connected graphs under the name of ``maximally locally connected graphs" as a generalization of maximally connected graphs, where a graph is called \textit{maximally connected} if its vertex connectivity equals its minimum degree. Among other results, Hellwig and Volkmann give a minimum degree condition that is sufficient to ensure that a graph is ideally connected. Finally, in theoretical computer science, various researchers have studied ideally connected graphs under the name of ``strongly Menger-connected graphs" in the context of parallel routing in computer networks. In fact, researchers have investigated the more general problem of finding internally disjoint paths between two vertices in networks with faults. A graph $G$ is called \textit{strongly $m$-Menger connected} if for any vertex set $F \subseteq V(G)$ of size at most $m$, the subgraph of $G$ induced by the vertices in $V(G) \setminus F$ is ideally connected. Then, a graph $G$ is ideally connected if and only if it is strongly $0$-Menger connected. Past work has focused on showing strong Menger connectivity in particular graph classes, particularly for various hypercubes networks \cite{Cheng2014, Gu2019, Shih2008} and permutation star graphs \cite{OhChen}.

\subsection{Overview of Results}
In this paper we study ideal connectedness in two well-known graph classes: cographs and chordal graphs. A graph $G$ is called a \textit{cograph} if $G$ does not contain an induced path on four vertices. A graph $G$ is called \textit{chordal} if every cycle in $G$ has a chord. Cographs and chordal graphs are both well-studied graph classes that have widespread connections across graph theory and computer science, such as in the study of perfect graphs and efficient algorithms; see \cite{Golumbic1980} for further background. 

For a fixed collection $\mathcal{H}$ of finite graphs, we say that $G$ is \textit{$\mathcal{H}$-free} if there is no induced subgraph of $G$ that is isomorphic to a graph in $\mathcal{H}$. If $\mathcal{H}$ consists of a single element $H$, we say that a graph $G$ is $H$-free if it is $\mathcal{H}$-free where $\mathcal{H}=\{ H \}$. For instance, cographs are exactly the $P_4$-free graphs and chordal graphs are exactly the $\{ C_k: k \geq 4\}$-free graphs.

In Section \ref{sec: cograph} we characterize the ideally connected cographs as follows.
\begin{theorem}\label{thm: intro cograph} Let $G$ be a cograph. Then $G$ is ideally connected if and only if $G$ is $2K_2$-free.
\end{theorem}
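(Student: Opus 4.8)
Since cographs are precisely the $P_4$-free graphs, a cograph is $2K_2$-free exactly when it is $\{P_4,2K_2\}$-free, and the plan is to prove both implications using only these two forbidden induced subgraphs together with Menger's theorem; no cotree decomposition of the cograph is needed.

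For the ``if'' direction I would isolate the following claim: if $G$ is $\{P_4,2K_2\}$-free and $u,v$ are distinct vertices with $\deg_G u\le\deg_G v$, then $\kappa_G(u,v)=\deg_G u$. The inequality $\le$ is immediate, so assume $\kappa_G(u,v)<\deg_G u$. Treat first the case $u\not\sim v$: by Menger a minimum $u$-$v$ separator $S$ satisfies $|S|<\deg_G u=|N_G(u)|\le|N_G(v)|$, so one can pick $a\in N_G(u)\setminus S$ lying in the component of $u$ in $G-S$ and $b\in N_G(v)\setminus S$ lying in the component of $v$; since these two components differ, $a\ne b$ and $a\not\sim b$, and moreover $a\not\sim v$ and $b\not\sim u$, since otherwise $u$-$a$-$v$ or $u$-$b$-$v$ would be a path of $G-S$. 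After checking that $u,a,b,v$ are distinct and that $ua$ and $bv$ are the only edges among them, $\{u,a,b,v\}$ induces a $2K_2$, a contradiction. The case $u\sim v$ is the same argument applied to $G-uv$, where $\kappa_{G-uv}(u,v)=\kappa_G(u,v)-1<\deg_G u-1=|N_{G-uv}(u)|$: the extraction yields the same relations $a\ne b$, $a\not\sim b$, $a\not\sim v$, $b\not\sim u$, but now $uv$ is an edge, so $\{u,a,b,v\}$ induces the path $a$-$u$-$v$-$b$, i.e.\ a $P_4$, a contradiction. Granting this claim, every $\{P_4,2K_2\}$-free graph has $\kappa_G(u,v)=\min\{\deg_G u,\deg_G v\}$ for all pairs and is therefore ideally connected.

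For the ``only if'' direction I would first record that in any $P_4$-free graph, for non-adjacent $u$ and $v$, the set $N_G(u)\cap N_G(v)$ is a $u$-$v$ vertex cut (equivalently, every $u$-$v$ path meets it). This is short: a shortest $u$-$v$ path in $G-(N_G(u)\cap N_G(v))$ is induced, hence by $P_4$-freeness has at most three vertices, hence has the form $u$-$w$-$v$ with $w$ a common neighbour of $u$ and $v$ — but such a $w$ has been deleted. Now suppose a cograph $G$ contains an induced $2K_2$ on $\{a,b,c,d\}$ with edges $ab$ and $cd$. Then $a\not\sim c$, while $b\in N_G(a)\setminus N_G(c)$ and $d\in N_G(c)\setminus N_G(a)$, so $|N_G(a)\cap N_G(c)|\le\min\{\deg_G a,\deg_G c\}-1$; since this set is a cut, Menger gives $\kappa_G(a,c)\le|N_G(a)\cap N_G(c)|<\min\{\deg_G a,\deg_G c\}$, so $G$ is not ideally connected. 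Contraposition completes the proof.

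The only fiddly parts are the distinctness checks on the four-vertex sets and keeping track of which edges survive in $G-uv$ and in $G-S$. The step I expect to be the real obstacle is recognising that the adjacent and non-adjacent cases are a single local obstruction — a $P_4$ versus a $2K_2$ — once the edge $uv$ is deleted in the adjacent case; a priori the adjacent case looks as though it should need induction on the cotree, and seeing that it does not is the key simplification.
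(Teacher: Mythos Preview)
Your proof is correct and takes a genuinely different route from the paper's. The paper proves Theorem~\ref{thm: intro cograph} by induction on $|V(G)|$ via Lerchs's decomposition: it first establishes Lemma~\ref{lemma: join} (that a join $G_1\nabla G_2$ of cographs is ideally connected iff both $G_1$ and $G_2$ are), and then peels off one join or one union at a time, observing that a ``safe'' union with an edgeless part preserves both $2K_2$-freeness and ideal connectedness. Your argument, by contrast, is entirely non-inductive and never touches the cotree: for the ``if'' direction you localise a hypothetical failure of ideal connectedness to a four-vertex obstruction (a $2K_2$ when $u\not\sim v$, a $P_4$ when $u\sim v$ after deleting the edge $uv$), and for the ``only if'' direction you exploit the pleasant fact that in a $P_4$-free graph the common neighbourhood of two non-adjacent vertices already separates them, which immediately caps $\kappa_G(a,c)$ below $\min\{\deg_G a,\deg_G c\}$ whenever an induced $2K_2$ is present. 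Your approach is shorter and more self-contained; the paper's approach, on the other hand, yields Lemma~\ref{lemma: join} as a reusable byproduct and makes the recursive structure of ideally connected cographs (closure under joins and under disjoint union with edgeless graphs) explicit, which the paper uses elsewhere.
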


We then study ideal connectedness in chordal graphs. One special subclass of chordal graphs is the class of \textit{split graphs}, the graphs whose vertices can be partitioned into a clique and an independent set. If the vertices $V(G)$ of a graph $G$ can be partitioned into a clique $K$ and independent set $I$, we say that $G$ is \textit{$(K,I)$-split}. If $U \subseteq V(G)$ is a set of vertices of $G$ and the vertices in $U$ can be listed as $v_1,v_2,\ldots,v_{|U|}$ so that for each $i \in \{1,2,\ldots, |U|-1\}$ we have $N_G(v_i) \subseteq N_G(v_{i+1})$, then we say that $U$ has the \textit{nested neighborhood property} in $G$. 
If $G$ is a $(K,I)$-split graph and $I$ has the nested neighborhood property in $G$, then $G$ is called a  \textit{threshold graph}. Threshold graphs are also sometimes referred to as \textit{nested split graphs}. Figure \ref{fig: threshold example} shows an example of a threshold graph on sixteen vertices.  


\begin{figure}[h]
\begin{center}
\begin{tikzpicture}

    \foreach \i in {1,...,12} {
        \node[draw=black, fill=white, circle, inner sep=1.5pt] (b\i) at ({2 + 1.5*cos(30*\i)}, {3 + 1.5*sin(30*\i)}) {};
    }

    \foreach \i in {1,...,12} {
        \foreach \j in {1,...,12} {
            \ifnum\i<\j
                \draw[black!30] (b\i) -- (b\j);
            \fi
        }
    }

    \def\xshift{6.0}

    \node[draw=black, fill=black, circle, inner sep=1.5pt] (bv1) at (\xshift,3.1) {};
    \node[draw=black, fill=black, circle, inner sep=1.5pt] (bv2) at (\xshift,3.6) {};
    \foreach \i in {12,1,2} {
        \draw[black, thick] (bv1) -- (b\i);
        \draw[black, thick] (bv2) -- (b\i);
    }

    \node[draw=black, fill=black, circle, inner sep=1.5pt] (gv) at (\xshift,2.4) {};
    \foreach \i in {11,12,1,2} {
        \draw[black, thick] (gv) -- (b\i);
    }

    \node[draw=black, fill=black, circle, inner sep=1.5pt] (rv) at (\xshift,1.7) {};
    \foreach \i in {10,11,12,1} {
        \draw[black, thick] (rv) -- (b\i);
    }
    \draw[black, thick] (rv) .. controls ($(rv)!0.5!(b2) + (0,0.2)$) .. (b2);

\end{tikzpicture}
\end{center}
\caption{A threshold graph with sixteen vertices.}
\label{fig: threshold example}
\end{figure}
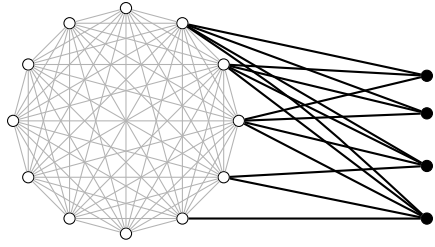

The class of threshold graphs is a subclass of both cographs and chordal graphs that was introduced by Chv\'atal and Hammer \cite{chvatal}, and independently by Henderson and Zalcstein \cite{Henderson1977}, in 1977. There are several known characterizations of threshold graphs. For instance, threshold graphs are exactly the graphs that can be obtained from the single-vertex graph by repeatedly adding either an isolated vertex or a dominating vertex. Threshold graphs are also exactly the $\{ C_4, P_4, 2K_2\}$-free graphs. For a comprehensive resource on threshold graphs, we refer the reader to the book by Mahadev and Peled \cite{Mahadev1995}.

In Section \ref{sec: d} we prove a structure theorem for any ideally connected graph with a minimum vertex cutset that forms a \textit{clique}, a set of pairwise adjacent vertices. In Section \ref{sec: chordal}, by using the property that every minimum vertex cutset of a chordal graph is a clique, we prove that the ideally connected chordal graphs are precisely the threshold graphs.

\begin{theorem}\label{thm: ic chordal iff threshold}
     Let $G$ be a chordal graph. Then $G$ is ideally connected if and only if $G$ is a threshold graph.
\end{theorem}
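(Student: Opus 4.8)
The plan is to handle the two directions separately. The forward implication follows immediately from Theorem~\ref{thm: intro cograph}, while the reverse implication carries the weight and is proved by induction on $|V(G)|$, with the structure theorem of Section~\ref{sec: d} doing the work in the inductive step.

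\textbf{Threshold $\Rightarrow$ ideally connected.} A threshold graph is $\{C_4,P_4,2K_2\}$-free, hence is a $2K_2$-free cograph, hence is ideally connected by Theorem~\ref{thm: intro cograph}. (One could equally run a direct induction on vertices: adding an isolated vertex trivially preserves ideal connectedness, and adding a dominating vertex $d$ to an ideally connected graph $G$ does too, since for $u,v\ne d$ the path $u$--$d$--$v$ yields $\kappa_{G+d}(u,v)\ge\kappa_G(u,v)+1=\min\{\deg u,\deg v\}+1$, which matches the upper bound $\min\{\deg_{G+d}u,\deg_{G+d}v\}$, while $\kappa_{G+d}(d,v)=\deg_G(v)+1=\deg_{G+d}(v)$.)

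\textbf{Chordal and ideally connected $\Rightarrow$ threshold.} Induct on $|V(G)|$. If $G$ is edgeless it is threshold, and if $G$ is disconnected with an edge, then $\kappa_G(u,v)=0$ for $u,v$ in distinct components forces $\min\{\deg u,\deg v\}=0$, so all but one component of $G$ is an isolated vertex; the nontrivial component is chordal, ideally connected, and smaller, hence threshold by induction, and adding back the isolated vertices keeps it threshold. If $G$ is complete it is threshold. So assume $G$ is connected and not complete. Pick a minimum vertex cutset $S$; since $G$ is chordal, $S$ is a clique, and since $G$ is ideally connected, $|S|=\kappa(G)=\delta(G)$ (from $\kappa(G)=\min_{u,v}\kappa_G(u,v)=\min_{u,v}\min\{\deg u,\deg v\}=\delta(G)$). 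Now apply the structure theorem of Section~\ref{sec: d} to $G$ and $S$: together with chordality it should force that every vertex of $S$ is a dominating vertex of $G$ --- equivalently, that $G$ is a join $K_{|S|}\ast\bigl(G[C]\sqcup\overline{K_{t}}\bigr)$, where $C$ is the one component of $G-S$ that may contain a vertex of degree greater than $|S|$ and the other $t\ge1$ components are single vertices joined to all of $S$. Given a dominating vertex $d$, the graph $G-d$ is chordal and also ideally connected --- deleting a dominating vertex drops $\kappa$ and every degree by exactly $1$, since at most one path of a maximum internally disjoint $u$--$v$ family can use $d$ --- so $G-d$ is threshold by induction, and hence $G=(G-d)+d$ is threshold.

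\textbf{Main obstacle.} Everything hinges on the inductive step, i.e.\ on extracting a dominating vertex from a connected, non-complete, chordal, ideally connected graph. Ideal connectedness already supplies strong constraints: two vertices $a,b$ in distinct components of $G-S$ satisfy $\kappa_G(a,b)\le|S|=\kappa(G)\le\min\{\deg a,\deg b\}$, so $\kappa_G(a,b)=|S|=\min\{\deg a,\deg b\}$ and all but one component of $G-S$ consists only of vertices of degree $|S|$; and for any non-adjacent pair $a,b$ with $\deg a\le\deg b$, the neighbourhood $N_G(a)$ is an $a$--$b$ separator of size $\deg a=\kappa_G(a,b)$, hence a \emph{minimum} (and so minimal) separator, hence --- because $G$ is chordal --- a clique, so $a$ is simplicial. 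The work is to combine these with the chordal facts that a minimal separator is a clique with a neighbour in each component of its complement, and that a non-complete chordal graph has two non-adjacent simplicial vertices, so as to pin down that each degree-$|S|$ component of $G-S$ is a single vertex adjacent to all of $S$ and that every vertex of $S$ also sees all of $C$; the latter is exactly where a missing edge would produce either a chordless $4$-cycle (violating chordality) or an induced $P_4$ together with one of the single-vertex components (which, via Theorem~\ref{thm: intro cograph}, is incompatible with $G$ being ideally connected). Carrying out this count-paths-versus-chords argument cleanly is the reason the structure theorem of Section~\ref{sec: d} is isolated, and I expect it to be the most delicate part of the proof.
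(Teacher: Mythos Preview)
Your forward direction and the reduction to the connected, non-complete case are correct and match the paper. Your observation that in an ideally connected chordal graph the lower-degree vertex of any non-adjacent pair must be simplicial is both correct and useful. Your reverse-direction strategy---locate a dominating vertex $d$ and induct on $G-d$---is different from the paper's and is viable in principle, but the argument you sketch for producing the dominating vertex has a genuine gap. You claim that a missing edge between some $s\in S$ and $c\in C_1$ would yield an induced $P_4$ (or $2K_2$) which ``via Theorem~\ref{thm: intro cograph}'' contradicts ideal connectedness. But Theorem~\ref{thm: intro cograph} is a statement about \emph{cographs}; a chordal graph is not a priori a cograph, and ideally connected graphs in general can contain induced $P_4$'s and $2K_2$'s (the $5$-cycle has both). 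Exhibiting such a subgraph therefore gives no contradiction, and the alternative chordless-$C_4$ route is not there either: simpliciality of $c$ only tells you $N_G(c)$ is a clique missing $s$, which does not force two common neighbours of $s$ and $c$ to be non-adjacent. In effect the appeal is circular---``chordal $+$ ideally connected $+$ induced $P_4$ is impossible'' is precisely the theorem being proved.

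The paper avoids this by inducting on the $S$-subgraph $H_1$ rather than on $G$ minus a dominating vertex. Lemma~\ref{chordal lem} (Theorem~\ref{thm: conditions} combined with chordality) shows each $H_i$ for $i\ge 2$ is $G[S]$ plus one simplicial vertex; then $H_1$ is a strictly smaller ideally connected chordal graph, hence threshold by induction. The remaining step is to show $S\subseteq S'$ for a $\kappa$-clique cut $S'$ of $H_1$ (when $H_1$ is not complete): applying Lemma~\ref{lemma: u-s} once in $G$ and once in $H_1$ to a hypothetical pair $s\in S\setminus S'$, $s'\in S'\setminus S$ gives $\deg_{H_1}(s')\le\deg_{H_1}(s)$ and $\deg_{H_1}(s)<\deg_{H_1}(s')$, a contradiction. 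This containment places $S$ inside the clique side of $H_1$'s threshold split with $S\subseteq N_{H_1}(v')$ for every $v'$ on the independent side, so the threshold structure extends to $G$. Incidentally this argument does show every vertex of $S$ is dominating in $G$, but only \emph{after} the induction on $H_1$ has run; I do not see how to extract that fact beforehand from Theorem~\ref{thm: conditions} and chordality alone, so if you want to keep your delete-a-dominating-vertex framing you will still need the paper's $S\subseteq S'$ step as a subroutine.
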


Theorem \ref{thm: ic chordal iff threshold} gives a new way of describing threshold graphs as a subclass of chordal graphs.

One of the most common ways to represent the structure of a chordal graph $G$ is through a \textit{clique tree}, a tree that has maximal cliques in $G$ as its nodes and where for any $v \in V(G)$, the subgraph of the tree induced by the nodes containing $v$ is connected. To further describe the subclass of chordal graphs formed by the threshold graphs, in Section \ref{section: clique trees} we characterize the clique trees of threshold graphs. We show that for any threshold graph $G$ with set $\mathcal{M}_G$ of maximal cliques, any tree on $|\mathcal{M}_G|$ nodes is a clique tree of $G$. This universality property is interesting because it does not hold for general chordal graphs or even for general split graphs.

\subsection{Notation}
We now describe the notation that is used throughout the paper. For a natural number $n$ we let $[n]$ denote the set $\{ 1,2,\ldots, n\}$. For a vertex $v$ in a graph $G$, the \textit{neighborhood of $v$}, denoted by $N_G(v)$, is the set of vertices in $G$ that are adjacent to $v$. The \textit{closed neighborhood of $v$}, denoted by $N_G[v]$, is the set $N_G(v) \cup \{v\}$. We let $P_k$ denote the graph isomorphic to a path on $k$ vertices.
For a path $P$, the \textit{length} of $P$ is the number of edges in $P$. 
For a path $P$ from vertex $u$ to vertex $v$, if $x$ and $y$ are internal vertices of $P$, then $xPy$ denotes the subpath of $P$ from $x$ to $y$. We say that a collection $\mathcal{P}$ of internally disjoint $u$-$v$ paths \textit{saturates $u$} if every vertex of $N_G(u) \setminus \{v\}$ is an internal vertex of exactly one path in $\mathcal{P}$. In particular, in any ideally connected graph $G$ and distinct vertices $u,v$ of $G$ with $\degr{G}{u} \leq \degr{G}{v}$, there exists a collection of $\degr{G}{u}$ internally disjoint $u$-$v$ paths saturating $u$. 
If $G$ is a graph with induced subgraphs $G_1$, $G_2$ which both have the subgraph $H$ as an induced subgraph, and moreover we have $G=G[V(G_1) \cup V(G_2)]$ and $V(G_1)\cap V(G_2)=V(H)$, then we say that $G$ arises from $G_1$ and $G_2$ by \textit{gluing along $H$}. 

For vertex-disjoint graphs $G_1$ and $G_2$, the \textit{graph union} $G_1 \cup G_2$ is the graph with vertex set $V(G_1) \cup V(G_2)$ and edge set $\{ uv: uv \in E(G_1)\cup E(G_2)\}$. Also, the \textit{graph join} $G_1 \nabla G_2$ of $G_1$ and $G_2$ is the graph obtained from $G_1 \cup G_2$ by adding edges $\{ uv: u \in V(G_1), v \in V(G_2)\}$.

For a subset $U \subseteq V(G)$, we let $G - U$ denote the subgraph of $G$ induced by the vertices in $V(G)\setminus U$. For a vertex $v \in V(G)$, we let $G-v$ denote the subgraph $G-\{ v\}$. For a connected graph $G$ that is not isomorphic to a complete graph and a vertex cutset $S$ of minimum size in $G$, the graph $G-S$ consists of multiple components. If $C_1, \ldots, C_m$ are the components of $G-S$, then the subgraphs $G[V(C_1) \cup S], \ldots, G[V(C_m) \cup S]$ will be called the \textit{$S$-subgraphs} of $G$.
If $S$ is a vertex cutset whose vertices form a clique in a connected graph $G$, then we say that $S$ is a \textit{clique cut} in $G$. If additionally we have $|S|=\kappa(G)$ and $G$ itself is not isomorphic to a complete graph, then we say that $S$ is a \textit{$\kappa$-clique cut} of $G$. Finally, a vertex $v \in V(G)$ is called \textit{simplicial} if $N_G(v)$ is a clique in $G$, in which case we say that $v$ is $\textit{simplicial to the clique $N_G(v)$}$.

\section{Ideally Connected Cographs}\label{sec: cograph}
In this section we characterize the ideally connected cographs. 
If $G_1$ and $G_2$ are vertex-disjoint cographs, one can show that $G_1 \cup G_2$ and $G_1 \nabla G_2$ are also cographs. In fact, Lerchs  \cite{lerchs} showed that cographs can be characterized as the graphs that can be constructed from isolated vertices by a finite number of graph union and graph join operations. Hence, to understand whether a cograph is ideally connected, we first study whether ideal connectedness is preserved when taking graph joins.

\begin{lemma}\label{lemma: join} Let $G_1$ and $G_2$ be vertex-disjoint cographs. Then $G_1 \nabla G_2$ is ideally connected if and only if both $G_1$ and $G_2$ are ideally connected.
\end{lemma}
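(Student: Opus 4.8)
The plan is to prove the two implications separately, in each case splitting the vertex pairs of $G := G_1 \nabla G_2$ into those lying in a common factor and those split between the two factors. Write $n_i := |V(G_i)|$ and observe that for $x \in V(G_i)$ we have $\deg_G(x) = \deg_{G_i}(x) + n_{3-i}$, that the subgraph of $G$ induced on $V(G_i)$ is exactly $G_i$, and that every vertex of $V(G_1)$ is adjacent in $G$ to every vertex of $V(G_2)$. One may assume both factors are nonempty, since otherwise the statement is trivial.

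For the forward direction, assume $G$ is ideally connected and fix distinct $u,v \in V(G_1)$ (the case $u,v \in V(G_2)$ is symmetric). There exist $\kappa_G(u,v) = \min\{\deg_G(u),\deg_G(v)\} = \min\{\deg_{G_1}(u),\deg_{G_1}(v)\} + n_2$ internally disjoint $u$-$v$ paths in $G$. Each such path that meets $V(G_2)$ uses an internal vertex there, and internal disjointness makes these vertices distinct across paths, so at most $n_2$ of the paths meet $V(G_2)$; the remaining at least $\min\{\deg_{G_1}(u),\deg_{G_1}(v)\}$ paths stay inside $V(G_1)$ and are thus internally disjoint $u$-$v$ paths of $G_1$. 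Together with the universal bound $\kappa_{G_1}(u,v) \le \min\{\deg_{G_1}(u),\deg_{G_1}(v)\}$ this forces equality, so $G_1$, and likewise $G_2$, is ideally connected.

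For the backward direction, assume $G_1$ and $G_2$ are ideally connected and fix distinct $u,v \in V(G)$; the goal is $\min\{\deg_G(u),\deg_G(v)\}$ internally disjoint $u$-$v$ paths. If $u$ and $v$ lie in the same factor, say $u,v \in V(G_1)$ with $\deg_{G_1}(u)\le\deg_{G_1}(v)$, take $\deg_{G_1}(u)$ internally disjoint $u$-$v$ paths inside $G_1$ (they exist since $G_1$ is ideally connected) and adjoin the $n_2$ paths $u$-$w$-$v$, one for each $w \in V(G_2)$; all of these are pairwise internally disjoint, for a total of $\deg_{G_1}(u) + n_2 = \deg_G(u)$.

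The crux is the mixed case $u \in V(G_1)$, $v \in V(G_2)$. After swapping the two factors if needed, assume $\deg_G(u) \le \deg_G(v)$; equivalently, the number of non-neighbors of $v$ in $V(G_2)\setminus\{v\}$, namely $n_2 - 1 - \deg_{G_2}(v)$, is at most $|A|$, where $A := V(G_1)\setminus N_{G_1}[u]$ has size $n_1 - 1 - \deg_{G_1}(u)$. Now build: the edge $uv$; the path $u$-$w$-$v$ for each $w \in N_{G_1}(u)$; the path $u$-$w$-$v$ for each $w \in N_{G_2}(v)$; and for each $w \in V(G_2)\setminus N_{G_2}[v]$ a path $u$-$w$-$a_w$-$v$ with the $a_w$ chosen to be distinct vertices of $A$, which is possible by the inequality above. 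These paths are pairwise internally disjoint -- in $V(G_1)$ they use exactly $N_{G_1}(u)$ together with a subset of $A$, and in $V(G_2)$ they use exactly $V(G_2)\setminus\{v\}$, each vertex once -- and there are $1 + \deg_{G_1}(u) + \deg_{G_2}(v) + (n_2 - 1 - \deg_{G_2}(v)) = \deg_{G_1}(u) + n_2 = \deg_G(u)$ of them, as wanted. I expect this construction to be the main obstacle: the key point is that the inequality $\deg_G(u) \le \deg_G(v)$ is precisely what guarantees enough spare vertices in $A$ to reroute the non-neighbors of $v$. The forward direction and the same-factor case, by contrast, are routine; note that the cograph hypothesis is never actually needed.
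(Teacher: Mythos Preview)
Your proof is correct and follows essentially the same approach as the paper: both directions split into the same-factor and mixed-factor cases, and in the mixed case you build exactly the same four families of paths (the edge $uv$, the length-two paths through $N_{G_1}(u)$ and through $N_{G_2}(v)$, and the length-three paths routing each non-neighbor of $v$ in $G_2$ through a distinct non-neighbor of $u$ in $G_1$), with the same degree inequality guaranteeing enough room. Your forward direction is phrased as a direct counting argument rather than the paper's contradiction via saturation, and you correctly observe that the cograph hypothesis is nowhere used---both minor but valid improvements in presentation.
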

\begin{proof}
    Let $G_1$ and $G_2$ be vertex-disjoint cographs and let $G$ be the graph join $G_1 \nabla G_2$.

First suppose $G_1$ and $G_2$ are both ideally connected. We will show that $G$ is ideally connected too.
    For distinct vertices $u,u' \in V(G_1)$, we may assume without loss of generality that $\degr{G}{u} \leq \degr{G}{u'}$. Clearly, $\degr{G}{u} = |N_{G_1}(u)| + |V(G_2)|$ and $\degr{G_1}{u} \leq \degr{G_1}{u'}$. Since $G_1$ is ideally connected, there is a collection $\mathcal{C}_1$ of $\degr{G_1}{u}$ internally disjoint $u$-$u'$ paths in $G_1$. Let $\mathcal{C}_2$ denote the collection of the $|V(G_2)|$ internally disjoint $u$-$u'$ paths of the form $\{ uvu': v \in V(G_2)\}$. Each path in $\mathcal{C}_2$ is internally disjoint from $V(G_1)$, so the paths in $\mathcal{C}_1 \cup \mathcal{C}_2$ show that $\kappa_G(u,u')=\mingen{G}{u}{u'}$. Similarly, for distinct vertices $v,v' \in V(G_2)$ we have $\kappa_{G}(v,v')=\mingen{G}{v}{v'}$.

    For vertices $u \in V(G_1)$ and $v \in V(G_2)$, we may assume without loss of generality that $\degr{G}{u} \leq \degr{G}{v}$. Let $\mathcal{C}_1$ denote the collection of paths consisting of the length-one path $uv$, the $|N_{G_1}(u)|$ paths of the form $\{ uyv: y \in N_{G_1}(u)\}$, and the $\degr{G_2}{v}$ paths of the form $\{ uwv: w \in N_{G_2}(v)\}$. Then $\mathcal{C}_1$ consists of $\degr{G_1}{u}+|N_{G_2}[v]|$ internally disjoint $u$-$v$ paths.
    Since $\degr{G}{u} \leq \degr{G}{v}$, we have $\degr{G_1}{u}+|V(G_2)| \leq \degr{G_2}{v}+|V(G_1)|$, which implies that $|V(G_1)|-\degr{G_1}{u} \geq |V(G_2)|-\degr{G_2}{v}$, so the number of nonneighbors of $u$ in $G_1$ is at least as large as the number of nonneighbors of $v$ in $G_2$. Hence, in $G$ there is a collection $\mathcal{C}_2$ of $|V(G_2) \setminus N_{G_2}[v]|$ internally disjoint $u$-$v$ paths of the form $\{ uxyv: x \in V(G_2) \setminus N_{G_2}[v], y \in V(G_1) \setminus N_{G_1}[u] \}$. Then, the paths in $\mathcal{C}_1 \cup \mathcal{C}_2$ show that $\kappa_G(u,v)=\degr{G_1}{u}+|V(G_2)|=\mingen{G}{u}{v}$. Hence, if $G_1$ and $G_2$ are ideally connected cographs, then so is $G$.

    Conversely, suppose that $G$ is ideally connected. 
    We will show that $G_1$ and $G_2$ are ideally connected. Suppose for the sake of contradiction that this is not the case, so without loss of generality we may assume that $G_1$ is not ideally connected. Then there are distinct vertices $u,u' \in V(G_1)$ with $\degr{G_1}{u} \leq \degr{G_1}{u'}$ and $\kappa_{G_1}(u,u') < \degr{G_1}{u}$. Clearly we have $\degr{G}{u}=\degr{G_1}{u}+|V(G_2)|<\degr{G}{u'}$. By the assumption that $G$ is ideally connected, there are $\degr{G}{u}$ internally disjoint $u$-$u'$ paths, saturating each neighbor of $u$ in $G$. Hence, since $N_G(u)$ contains $V(G_2)$, there are $\degr{G}{u}-|V(G_2)|=\degr{G_1}{u}$ internally disjoint $u$-$u'$ paths in $G_1$, a contradiction. Hence, if $G$ is ideally connected, then $G_1$ and $G_2$ must be ideally connected too.
\end{proof}

We can now prove Theorem \ref{thm: intro cograph}, which states that a cograph is ideally connected if and only if it is $2K_2$-free.

\begin{proof}[Proof of Theorem \ref{thm: intro cograph}]
Let $G$ be a cograph. We will prove the theorem by induction on $|V(G)|$. The theorem clearly holds for $|V(G)| \leq 2$ and clearly holds if $G$ is an edgeless graph, so we may assume that $|V(G)| \geq 3$ and $G$ has at least one edge.

First suppose that $G$ is ideally connected. Since $G$ is a cograph, there are vertex-disjoint cographs $G_1$ and $G_2$ so that either $G = G_1 \cup G_2$ or $G=G_1 \nabla G_2$. If $G=G_1 \cup G_2$, then since $G$ is ideally connected, both $G_1$ and $G_2$ are ideally connected and at least one of them is edgeless. We assume without loss of generality that $G_2$ is edgeless. Since $G_1$ is ideally connected, the inductive hypothesis implies that $G_1$ is $2K_2$-free. Clearly, the graph union of a $2K_2$-free graph with an edgeless graph is $2K_2$-free, so in this case $G$ is $2K_2$-free. On the other hand, if $G = G_1 \nabla G_2$, then by Lemma \ref{lemma: join}, both $G_1$ and $G_2$ are ideally connected. By the inductive hypothesis, $G_1$ and $G_2$ are $2K_2$-free. It is easy to see that the graph join of two vertex-disjoint $2K_2$-free graphs is also $2K_2$-free, so in this case $G$ is also $2K_2$-free. Hence, if $G$ is an ideally connected cograph, then $G$ is $2K_2$-free.

Now suppose $G$ is  a $2K_2$-free cograph. Then there are vertex-disjoint $2K_2$-free cographs $G_1$ and $G_2$ so that either $G = G_1 \cup G_2$ or $G=G_1 \nabla G_2$. By the inductive hypothesis, both $G_1$ and $G_2$ are ideally connected. If $G = G_1 \nabla G_2$, then $G$ is ideally connected by Lemma \ref{lemma: join}. If $G = G_1 \cup G_2$ then since $G$ is $2K_2$-free, at least one of $G_1$ and $G_2$ is edgeless. In this case, since the graph union of an ideally connected graph with an edgeless graph is also ideally connected, then $G$ is ideally connected. Hence, any $2K_2$-free cograph is ideally connected.
\end{proof}

For vertex-disjoint cographs $G_1$ and $G_2$, say that the graph union operation $G_1 \cup G_2$ is \textit{safe} if either $G_1$ or $G_2$ is edgeless. It follows from Theorem \ref{thm: intro cograph} that the ideally connected cographs are exactly the graphs that can be constructed from isolated vertices by a finite number of graph join and safe graph union operations. 

Theorem \ref{thm: intro cograph} shows that the ideally connected cographs are exactly the $\{P_4, 2K_2\}$-free graphs.
The complement of the graph $P_4$ is isomorphic to $P_4$ and the complement of the graph $2K_2$ is the cycle $C_4$. The $\{P_4, C_4\}$-free graphs are known as \textit{trivially perfect graphs}, so Theorem \ref{thm: intro cograph} shows that the ideally connected cographs are exactly the complements of trivially perfect graphs.
Since the threshold graphs are the $\{ P_4, C_4, 2K_2\}$-free graphs, the threshold graphs are exactly the ideally connected cographs whose complements are also ideally connected.
In particular, Theorem \ref{thm: intro cograph} shows that threshold graphs are ideally connected. This fact will be used in later sections.
\begin{corollary}\label{cor: threshold ic} If $G$ is a threshold graph, then $G$ is ideally connected. \qed
\end{corollary}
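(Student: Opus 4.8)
The plan is to obtain this immediately from Theorem~\ref{thm: intro cograph}. Recall from the overview that threshold graphs are precisely the $\{P_4, C_4, 2K_2\}$-free graphs; in particular, every threshold graph $G$ is $P_4$-free, hence a cograph, and is also $2K_2$-free. Theorem~\ref{thm: intro cograph} asserts that a cograph is ideally connected if and only if it is $2K_2$-free, so applying it to $G$ yields that $G$ is ideally connected, which is exactly the claim.

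Since the deduction is a single step, there is no genuine obstacle here; the only input beyond Theorem~\ref{thm: intro cograph} is the standard forbidden-subgraph characterization of threshold graphs (see Mahadev and Peled~\cite{Mahadev1995}), which is already recorded in the overview. If one preferred an argument that does not invoke that characterization, one could instead induct on $|V(G)|$ using the constructive definition of threshold graphs: a threshold graph $G$ on at least two vertices is either $G' \cup K_1$ or $G' \nabla K_1$ for some threshold graph $G'$, which is ideally connected by the inductive hypothesis. In the join case, $K_1$ is a cograph and is (vacuously) ideally connected, so $G = G' \nabla K_1$ is ideally connected by Lemma~\ref{lemma: join}; in the union case $G = G' \cup K_1$ is a safe graph union because $K_1$ is edgeless, and the graph union of an ideally connected graph with an edgeless graph is ideally connected, as noted in the proof of Theorem~\ref{thm: intro cograph}.

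Either way the substantive work has already been carried out in this section, and the corollary simply records the consequence that will be needed in later sections.
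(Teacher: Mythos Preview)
Your proposal is correct and matches the paper's own reasoning exactly: the corollary is recorded immediately after observing that threshold graphs are $\{P_4, C_4, 2K_2\}$-free, hence $2K_2$-free cographs, and Theorem~\ref{thm: intro cograph} then gives ideal connectedness. The alternative inductive argument you sketch is also valid but is not what the paper uses.
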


An interesting direction for future work would be to determine the ideally connected graphs in graph classes that are superclasses of cographs. In particular, an approach extending the one taken in this section seems promising for graph classes defined by split decompositions (edge cuts that induce complete bipartite graphs), such as distance-hereditary graphs, parity graphs, or circle graphs.
\section{Ideally Connected Graphs with $\kappa$-Clique Cuts} \label{sec: d}
In this section we analyze the structure of ideally connected graphs with $\kappa$-clique cuts, which we will apply to chordal graphs in Section \ref{sec: chordal}. We begin with a few simple lemmas about the degrees of pairs of vertices in an ideally connected graph.
\begin{lemma} \label{lem: subg} Let $G$ be a graph with a $\kappa$-clique cut $S$. Let $t=|S|$. If $G$ is ideally connected, then at most one component of $G-S$ has a vertex of degree greater than $t$ in $G$. 
\end{lemma}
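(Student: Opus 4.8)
The plan is to argue by contradiction, exploiting the fact that a vertex cutset of size $t$ caps the number of internally disjoint paths between any two vertices it separates. Suppose two distinct components $C_1$ and $C_2$ of $G - S$ each contain a vertex of degree greater than $t$ in $G$. Pick $u \in V(C_1)$ and $v \in V(C_2)$ with $\degr{G}{u} > t$ and $\degr{G}{v} > t$. Since $C_1 \neq C_2$, the vertices $u$ and $v$ are distinct and lie in different components of $G - S$, so $S$ is a $u$-$v$ vertex separator.

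Next I would observe that every $u$-$v$ path in $G$ must pass through a vertex of $S$: deleting $S$ leaves $u$ and $v$ in different components, so no $u$-$v$ path can avoid $S$. Consequently, any collection of internally disjoint $u$-$v$ paths uses distinct vertices of $S$ and hence has at most $|S| = t$ members, giving $\kappa_G(u,v) \le t$. Combining this with the degree hypotheses yields $\mingen{G}{u}{v} \ge t+1 > t \ge \kappa_G(u,v)$, which contradicts the assumption that $G$ is ideally connected. Therefore at most one component of $G - S$ can contain a vertex of degree greater than $t$ in $G$.

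There is no substantial obstacle here: the statement is essentially an immediate consequence of the Menger-type bound $\kappa_G(u,v) \le |S|$ whenever $S$ separates $u$ from $v$. The only points needing a moment's care are that $u$ and $v$ are genuinely distinct (immediate, since they lie in different components) and that $S$ actually separates them in $G$ (immediate from the definition of the components of $G-S$). Note that the hypothesis that $S$ is a \emph{clique} plays no role in this particular lemma, although it will presumably be essential in the subsequent structural results.
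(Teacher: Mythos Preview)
Your proof is correct and follows essentially the same approach as the paper: both exploit that $S$ separates vertices in different components, so $\kappa_G(u,v)\le |S|=t$, and combine this with ideal connectedness to force one of the degrees to be at most $t$. Your observation that the clique hypothesis on $S$ is unused in this lemma is also accurate.
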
 
\begin{proof} Suppose that $C_i$ and $C_j$ are distinct components of $G-S$ and there is a vertex $v \in V(C_i)$ with $\degr{G}{v} > t$. For any vertex $v'$ in $V(C_j)$, we will show that $\degr{G}{v'} \leq t$.

Any $v$-$v'$ path intersects $S$, so  $\kappa_G(v,v') \leq |S|=t$. Since $G$ is ideally connected, we have $\mingen{G}{v}{v'} = \kappa_G(v,v') \leq t$. Then, since $\degr{G}{v} > t$, we must have $\degr{G}{v'} \leq t$.
\end{proof}

The lemma shows that in an ideally connected graph $G$ with a $\kappa$-clique cut $S$, for any pair of vertices $u,v \in V(G) \setminus S$ in distinct $S$-subgraphs of $G$, we have $\mingen{G}{u}{v}=|S|$.

The next lemma considers the quantity $\mingen{G}{u}{s}$ for vertices $u \in V(G) \setminus S$ and $s \in S$ in an ideally connected graph $G$ with a $\kappa$-clique cut $S$.

\begin{lemma}\label{lemma: u-s} Let $G$ be a graph with a $\kappa$-clique cut $S$, and suppose $u \in V(H) \setminus S$ for an $S$-subgraph $H$ of $G$. If $G$ is ideally connected, then for every $s \in S$, we have $\degr{H}{u}= \degr{G}{u} \leq \degr{H}{s} < \degr{G}{s}$ and $\kappa_H(u,s)=\kappa_G(u,s)=\degr{H}{u}$.
\end{lemma}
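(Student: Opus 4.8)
The plan is to prove Lemma~\ref{lemma: u-s} in three stages, first establishing the degree inequalities and then using them to pin down the connectivity values.

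\textbf{Step 1: the degree equalities and inequalities.} First I would observe that since $S$ is a clique cut, every neighbor of $u$ lies in $V(H)$: indeed $u$ is separated from every other $S$-subgraph by $S$, and $S \subseteq V(H)$, so $N_G(u) \subseteq V(H)$ and hence $\degr{H}{u} = \degr{G}{u}$. For the strict inequality $\degr{H}{s} < \degr{G}{s}$: since $G$ has a $\kappa$-clique cut, $G$ is not complete and $G - S$ has at least two components, so there is some $S$-subgraph $H'$ other than $H$; because $S$ is a minimum cutset it is ``tight,'' meaning $s$ has a neighbor in every component of $G - S$ (otherwise $S \setminus \{s\}$ would still be a cutset, contradicting minimality), so $s$ has a neighbor in $V(H') \setminus S$, which contributes to $\degr{G}{s}$ but not to $\degr{H}{s}$; hence $\degr{H}{s} < \degr{G}{s}$. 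The remaining inequality $\degr{G}{u} \le \degr{H}{s}$ is the crux and I treat it in Step~2.

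\textbf{Step 2: proving $\degr{G}{u} \le \degr{H}{s}$.} Suppose for contradiction that $\degr{G}{u} > \degr{H}{s}$. Pick any vertex $v$ in an $S$-subgraph $H' \ne H$, say $v \in V(H') \setminus S$. By Lemma~\ref{lem: subg}, at most one component of $G - S$ contains a vertex of $G$-degree exceeding $t = |S|$; I would first note $\degr{H}{s} \ge t$ is impossible to leverage directly, so instead I argue as follows. Consider the pair $u, s$. Every $u$–$s$ path that leaves $V(H)$ must re-enter through $S \setminus \{s\}$, so in fact all $u$–$s$ paths can be taken inside $H$ together with the single vertex $s$; thus $\kappa_G(u,s) = \kappa_H(u,s)$. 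Since $G$ is ideally connected, $\kappa_G(u,s) = \mingen{G}{u}{s}$. Now in $G$ there is a family of $\degr{G}{u}$ internally disjoint $u$–$s$ paths saturating $u$ (as $\degr{G}{u} = \mingen{G}{u}{s}$ when $\degr{G}{u} \le \degr{G}{s}$, which holds by Step~1's strict inequality combined with $\degr{H}{s} < \degr{G}{s}$ — but wait, that needs $\degr{G}{u} \le \degr{G}{s}$, not yet established). The clean route: from $\degr{G}{u} > \degr{H}{s}$ and the fact that these paths live in $H$, count that a saturating family of $\degr{G}{u}$ internally disjoint $u$–$s$ paths inside $H$ forces $\degr{H}{s} \ge \degr{G}{u}$ (each path ends at $s$ through a distinct neighbor of $s$ in $H$, and there are $\min\{\degr{G}{u},\degr{H}{s}\}$ of them, but saturation at $u$ needs all $\degr{G}{u}$ of them), yielding the contradiction. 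I expect this counting/saturation argument — correctly handling whether the saturating family exists on the $u$-side or the $s$-side depending on which degree is smaller — to be the main obstacle, and the resolution is that the paths being confined to $H$ is exactly what makes the $S$-clique structure do the work.

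\textbf{Step 3: the connectivity equalities.} Once $\degr{G}{u} \le \degr{H}{s}$ is known, combining with $\degr{H}{s} < \degr{G}{s}$ gives $\degr{G}{u} < \degr{G}{s}$, so $\mingen{G}{u}{s} = \degr{G}{u} = \degr{H}{u}$. By the confinement observation in Step~2, $\kappa_H(u,s) = \kappa_G(u,s)$, and ideal connectedness of $G$ gives $\kappa_G(u,s) = \mingen{G}{u}{s} = \degr{H}{u}$, completing the chain of equalities. I would close by remarking that this shows $H$ itself witnesses the ideal-connectedness value for the pair $(u,s)$, which is the form needed for the structural analysis in Section~\ref{sec: chordal}.
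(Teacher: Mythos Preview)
Your Step~2 contains a real gap that you yourself flag but never close. The confinement-and-counting argument you sketch --- take $\degr{G}{u}$ internally disjoint $u$--$s$ paths, reroute them inside $H$, and observe that each arrives at $s$ through a distinct $H$-neighbor of $s$, forcing $\degr{H}{s} \ge \degr{G}{u}$ --- is valid only once you already know $\degr{G}{u} \le \degr{G}{s}$, since otherwise ideal connectedness does not guarantee $\degr{G}{u}$ such paths in the first place. You recognize this circularity and gesture at a case split on which degree is smaller, but your proposed resolution (``the paths being confined to $H$'') is precisely what fails in the remaining case: when $\degr{G}{s} < \degr{G}{u}$, the saturating family lives at $s$, and one of those paths necessarily passes through a neighbor of $s$ \emph{outside} $H$, so confinement to $H$ is not the operative mechanism. (Your side claim that any $u$--$s$ path leaving $V(H)$ ``must re-enter through $S\setminus\{s\}$'' is also not literally true --- it may re-enter at $s$ itself --- though the conclusion $\kappa_G(u,s)=\kappa_H(u,s)$ is correct via a shortcutting argument through the first $S$-vertex on each path.)

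The paper handles exactly the missing case. Assuming $\degr{G}{u} > \degr{G}{s}$, take $\degr{G}{s}$ internally disjoint $u$--$s$ paths saturating $s$; every neighbor of $s$ --- including the neighbor $z$ lying in another component of $G-S$, and every $s' \in S \setminus \{s\}$ --- is then an internal vertex of \emph{exactly one} path. The path $P_i$ containing $z$ therefore contains no vertex of $S \setminus \{s\}$, yet must travel from $z$ back to $u$, which is impossible since $S$ separates them and $s$ is an endpoint. This contradiction gives $\degr{G}{u} \le \degr{G}{s}$, whence $\kappa_G(u,s)=\degr{G}{u}=\degr{H}{u}$; the sharper inequality $\degr{G}{u} \le \degr{H}{s}$ then follows a posteriori from $\kappa_H(u,s) = \degr{H}{u} \le \degr{H}{s}$. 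The key point is that the contradiction is driven by a path that \emph{cannot} stay in $H$ --- the opposite of your confinement heuristic.
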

\begin{proof}
    The identity $\degr{H}{u}=\degr{G}{u}$ is obvious. By definition of a $\kappa$-clique cut, $G-S$ has at least two components and so the vertex $s$ has a neighbor $z$ in a component of $G-S$ different from $H-S$. Then, $\degr{H}{s}<\degr{G}{s}$. Hence, it remains to show that $\degr{G}{u}\leq \degr{G}{s}$.

    Suppose for the sake of contradiction that $\degr{G}{u}> \degr{G}{s}$. Since $G$ is ideally connected, there's a collection $\mathcal{P}=\{ P_1, \ldots, P_{\degr{G}{s}}\}$ of $\degr{G}{s}$ internally disjoint $u$-$s$ paths in $G$ saturating $s$, so that every neighbor of $s$ appears in some path in $\mathcal{P}$. Let $P_i \in \mathcal{P}$ denote the path in this collection that contains the vertex $z$, so $P_i$ is a $s$-$u$ path that includes the vertex $z$ but no vertex in $S\setminus \{ s\}$. However, in order to reach vertex $u$ from $z$, the path $P_i$ must intersect $S \setminus \{ s\}$, a contradiction. This contradiction establishes that $\degr{G}{u}\leq \degr{G}{s}$, and so since $G$ is ideally connected, we have $\kappa_G(u,s)=\degr{H}{u}=\kappa_H(u,s)$.
\end{proof}

The lemma implies the following corollary.

\begin{corollary} If $G$ is an ideally connected graph with a $\kappa$-clique cut $S$, then $S$ is the unique $\kappa$-clique cut in $G$. Moreover, we have 
\begin{center}
    $ \Delta(G) = \max\{\operatorname{deg}_G(v): v \in S\} \geq |S|+1$ 
\end{center}
\end{corollary}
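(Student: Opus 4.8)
The plan is to prove the degree statement and the uniqueness statement separately, both as short consequences of Lemma \ref{lemma: u-s}. Write $t = |S| = \kappa(G)$. For the degree statement, I would first observe that by Lemma \ref{lemma: u-s}, every vertex $u \in V(G) \setminus S$ lies in some $S$-subgraph $H$ and satisfies $\degr{G}{u} < \degr{G}{s}$ for every $s \in S$; hence the maximum degree of $G$ is attained inside $S$, giving $\Delta(G) = \max\{\degr{G}{v} : v \in S\}$. To see that this maximum is at least $t+1$, note that each $s \in S$ is adjacent to the $t-1$ other vertices of the clique $S$, and since $S$ is a \emph{minimum} vertex cutset, $s$ must also have a neighbor in each of the (at least two) components of $G - S$: otherwise deleting $s$ from $S$ would leave a cutset of size $t-1 < \kappa(G)$. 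Thus $\degr{G}{s} \geq (t-1) + 2 = t+1$ for every $s \in S$, and in particular $\max\{\degr{G}{v} : v \in S\} \geq |S| + 1$.

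For uniqueness, I would argue by contradiction: suppose $S'$ is a $\kappa$-clique cut of $G$ with $S' \neq S$. Since $|S| = |S'| = \kappa(G)$, neither set contains the other, so we may fix vertices $s \in S \setminus S'$ and $s' \in S' \setminus S$. Because $s \notin S'$, the vertex $s$ lies in $V(H') \setminus S'$ for some $S'$-subgraph $H'$ of $G$, so Lemma \ref{lemma: u-s} (applied with the $\kappa$-clique cut $S'$ and the vertex $s$) yields $\degr{G}{s} < \degr{G}{r}$ for every $r \in S'$; taking $r = s'$ gives $\degr{G}{s} < \degr{G}{s'}$. Symmetrically, since $s' \notin S$, applying Lemma \ref{lemma: u-s} with the cut $S$ and the vertex $s'$ gives $\degr{G}{s'} < \degr{G}{s}$. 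These two inequalities contradict each other, so no such $S'$ exists and $S$ is the unique $\kappa$-clique cut of $G$.

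I do not expect a serious obstacle here; the argument is essentially bookkeeping on top of Lemma \ref{lemma: u-s}. The one point requiring care is the verification that a vertex of one $\kappa$-clique cut genuinely lies in an $S$-subgraph of the other (equivalently, in a component of $G$ minus the other cut), so that Lemma \ref{lemma: u-s} actually applies — and this holds precisely because that vertex is not contained in the other cut. The only external fact used is the standard observation that every vertex of a minimum vertex cutset has a neighbor in every component of the graph obtained by deleting the cutset.
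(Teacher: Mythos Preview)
Your proof is correct and follows essentially the same route as the paper: both parts are derived directly from Lemma~\ref{lemma: u-s}, and your uniqueness argument (pick $s \in S \setminus S'$, $s' \in S' \setminus S$, apply the lemma with each cut to obtain $\degr{G}{s} < \degr{G}{s'}$ and $\degr{G}{s'} < \degr{G}{s}$) matches the paper's almost verbatim. The only cosmetic difference is that you spell out the $\Delta(G) \geq |S|+1$ bound via the standard ``every vertex of a minimum cutset has a neighbor in every component'' fact, whereas the paper simply cites Lemma~\ref{lemma: u-s} (which already encodes the inequality $\degr{H}{s} < \degr{G}{s}$); both are fine.
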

\begin{proof}
    The second property follows immediately from Lemma \ref{lemma: u-s}. To prove the first property, suppose for the sake of contradiction that $S'$ is another $\kappa$-clique cut in $G$ that is different from $S$. Then, since both $S$ and $S'$ have size $\kappa(G)$, there are vertices $s \in S\setminus S'$ and $s' \in S'\setminus S$. Let $H$ denote the $S$-subgraph of $G$ containing $s'$ and let $H'$ denote the $S'$-subgraph of $G$ containing $s$. By applying Lemma \ref{lemma: u-s} to the $S'$-subgraph $H'$, we have $\degr{G}{s}=\degr{H'}{s} \leq \degr{H'}{s'}<\degr{G}{s'}$. However, by applying the Lemma \ref{lemma: u-s} to the $S$-subgraph $H$ we obtain $\degr{G}{s'}=\degr{H}{s'} \leq \degr{H}{s}<\degr{G}{s}$, which contradicts that $\degr{G}{s}<\degr{G}{s'}$. Hence, $S$ must be the unique $\kappa$-clique cut in $G$.
\end{proof}


Finally, we compare the degrees of vertices $s_1, s_2 \in S$ in an ideally connected graph $G$ with a $\kappa$-clique cut $S$.

\begin{lemma}\label{lem s-s} Let $G$ be a graph with a $\kappa$-clique cut $S$, and let $s_1, s_2$ be distinct vertices in $S$ where $\degr{G}{s_1} \leq \degr{G}{s_2}$. Then for any $S$-subgraph $H$ of $G$, we have $\mingen{H}{s_1}{s_2}=\degr{H}{s_1}$ and $\kappa_H(u,v)=\degr{H}{s_1}$.
\end{lemma}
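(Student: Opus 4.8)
The plan is to reduce everything to the single inequality $\kappa_H(s_1,s_2)\ge \degr{H}{s_1}$ and then build the required internally disjoint paths explicitly, exploiting that $S$ is a clique. First note that this inequality suffices: combined with the universal bound $\kappa_H(s_1,s_2)\le \mingen{H}{s_1}{s_2}\le \degr{H}{s_1}$ it forces $\degr{H}{s_1}\le\kappa_H(s_1,s_2)\le\mingen{H}{s_1}{s_2}\le\degr{H}{s_1}$, so $\mingen{H}{s_1}{s_2}=\degr{H}{s_1}$ and $\kappa_H(s_1,s_2)=\degr{H}{s_1}$, which are the two assertions (this uses that $G$ is ideally connected). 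Write $H=G[V(C)\cup S]$ for the component $C$ of $G-S$ associated to $H$, put $t=|S|$, and let $d=|N_G(s_1)\cap V(C)|$. Since $S$ is a clique, the neighbours of $s_1$ in $H$ are exactly the $t-1$ vertices of $S\setminus\{s_1\}$ together with the $d$ vertices of $N_G(s_1)\cap V(C)$, so $\degr{H}{s_1}=(t-1)+d$; it thus suffices to find $(t-1)+d$ pairwise internally disjoint $s_1$-$s_2$ paths inside $H$.

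Since $\degr{G}{s_1}\le\degr{G}{s_2}$ and $G$ is ideally connected, $\kappa_G(s_1,s_2)=\degr{G}{s_1}$, so I fix a collection $\mathcal P$ of $\degr{G}{s_1}$ internally disjoint $s_1$-$s_2$ paths. Their first edges are $\degr{G}{s_1}$ distinct edges incident to $s_1$, hence all of them; as $s_1 s_2\in E(G)$, exactly one path in $\mathcal P$ is the edge $s_1 s_2$, and for each $w\in N_G(s_1)\setminus\{s_2\}$ there is a unique path $P_w\in\mathcal P$ whose first edge is $s_1 w$ (equivalently, $\mathcal P$ may be taken to saturate $s_1$).

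The heart of the argument is the claim that for each $w\in N_G(s_1)\cap V(C)$, the path $P_w$ lies entirely in $G[V(C)\cup\{s_1,s_2\}]\subseteq H$, with all of its internal vertices in $V(C)$. To prove it, follow $P_w$ from $s_1$: after the first edge it is in $V(C)$, and since $S$ separates $C$ from the rest of $G-S$, the only way for $P_w$ to leave $V(C)$ is to step into $S$; it cannot revisit $s_1$, and reaching $s_2$ ends the path, so if $P_w$ ever leaves $V(C)\cup\{s_1,s_2\}$ it must first use some $\sigma\in S\setminus\{s_1,s_2\}$ as an internal vertex. But $\sigma\in N_G(s_1)\setminus\{s_2\}$, so $\sigma$ is the first internal vertex of $P_\sigma$, and $P_\sigma\ne P_w$ since their first edges differ — contradicting internal disjointness. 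I expect this confinement step to be the main obstacle, since it is the one place where the fact that $\mathcal P$ uses every edge at $s_1$ (i.e.\ the saturation property) is essential.

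With the claim in hand I assemble the paths inside $H$: (i) the edge $s_1 s_2$; (ii) the $d$ paths $P_w$ for $w\in N_G(s_1)\cap V(C)$, which lie in $H$ by the claim and are internally disjoint as members of $\mathcal P$; and (iii) the $t-2$ length-two paths $s_1\sigma s_2$ for $\sigma\in S\setminus\{s_1,s_2\}$, which lie in $H$ because $S$ is a clique contained in $V(H)$. The internal vertices of the families (i), (ii), (iii) lie respectively in the empty set, in $V(C)$, and in the pairwise distinct singletons $\{\sigma\}\subseteq S\setminus\{s_1,s_2\}$, so these $1+d+(t-2)=(t-1)+d=\degr{H}{s_1}$ paths are pairwise internally disjoint. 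Therefore $\kappa_H(s_1,s_2)\ge\degr{H}{s_1}$, which completes the proof.
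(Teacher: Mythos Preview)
Your proof is correct and follows essentially the same approach as the paper: take a saturating family of $\deg_G(s_1)$ internally disjoint $s_1$--$s_2$ paths in $G$, observe that the paths whose second vertex lies in the component $C=H-S$ must have all their internal vertices in $C$ (since any exit from $C$ would hit some $\sigma\in S\setminus\{s_1,s_2\}$, which is already occupied by $P_\sigma$), and combine these with the edge $s_1s_2$ and the length-two paths through $S\setminus\{s_1,s_2\}$ to get $\deg_H(s_1)$ internally disjoint paths inside $H$. The only cosmetic differences are that the paper replaces each $P_\sigma$ by $s_1\sigma s_2$ up front via a ``without loss of generality'' and then reads off $\deg_H(s_1)\le\deg_H(s_2)$ directly by counting last vertices, whereas you keep the original $P_w$'s, add the $s_1\sigma s_2$'s separately, and recover the degree comparison implicitly from $\kappa_H(s_1,s_2)\ge\deg_H(s_1)$; your confinement step is also spelled out more carefully than the paper's.
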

\begin{proof}
    Let $c = \degr{G}{s_1}$ and let $t=|S|$. Since $G$ is ideally connected, there's a collection  $\mathcal{P} = \{ P_1, \ldots, P_c\}$ of $c$ internally disjoint $s_1$-$s_2$ paths in $G$ saturating $s_1$. Without loss of generality, we may assume $P_1$ is the length-one path $s_1s_2$ and that $\{ P_2, \ldots, P_{|S|-1}\}$ are the distinct length-two paths of the form $s_1s's_2$ where $s' \in S \setminus \{ s_1, s_2\}$. Hence, there are $c-(t-1) > 0$ $s_1$-$s_2$ paths, each of length greater than one, in $G$ that are internally disjoint from one another and internally disjoint from $S$. Any such path has its interior entirely in a single component of $G-S$, so since the paths in $\mathcal{P}$ saturate $s_1$, the vertex $s_2$ must have at least at many neighbors as $s_1$ in any component of $G-S$. Hence, in any $S$-subgraph $H$ of $G$, we have $\mingen{H}{s_1}{s_2}=\degr{H}{s_1}$ and $\kappa_H(s_1,s_2)=\degr{H}{s_1}$. 
\end{proof}

If $G$ is an ideally connected graph, it is natural to ask for sufficient conditions for a subgraph of $G$ to also be ideally connected. When $G$ has a $\kappa$-clique cut $S$, we now show that each $S$-subgraph of $G$ is also ideally connected, which gives one sufficient condition for ideal connectedness to be preserved when taking subgraphs. 

\begin{lemma} \label{lem: complete cutset} Let $S$ be a $\kappa$-clique cut of an ideally connected graph $G$, and let $t=|S|$. Then for any $S$-subgraph $H$ of $G$, $H$ is ideally connected and $\kappa(H) \geq t$.
\end{lemma}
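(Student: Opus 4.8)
The plan is to fix an $S$-subgraph $H = G[V(C)\cup S]$ for a component $C$ of $G-S$, and check the defining condition $\kappa_H(x,y)=\mingen{H}{x}{y}$ for every pair $x,y\in V(H)$ by splitting into cases according to whether the endpoints lie in $S$ or in $V(C)$. Three of the four cases are essentially already done by the preceding lemmas, so the work is to assemble them and then handle the remaining case. Specifically: if $x,y\in S$, Lemma~\ref{lem s-s} gives $\kappa_H(x,y)=\mingen{H}{x}{y}$ directly; if $x\in V(C)$ and $y\in S$, Lemma~\ref{lemma: u-s} gives $\kappa_H(x,y)=\degr{H}{x}=\mingen{H}{x}{y}$ (since that lemma also shows $\degr{H}{x}=\degr{G}{x}\leq\degr{H}{y}$); and if $x\in V(C)$, $y\in S$ fails to cover the remaining pairs, the genuinely new case is $x,y\in V(C)$, i.e. both endpoints in the same component.

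For the main case $x,y\in V(C)$, assume without loss of generality $\degr{H}{x}\leq\degr{H}{y}$, and note $\degr{H}{x}=\degr{G}{x}$, $\degr{H}{y}=\degr{G}{y}$ since $C$ is a component of $G-S$. Because $G$ is ideally connected there is a collection $\mathcal{P}$ of $\degr{G}{x}$ internally disjoint $x$-$y$ paths in $G$ saturating $x$. The idea is to show each such path can be assumed to stay inside $H$. A path in $\mathcal{P}$ that leaves $V(C)$ must enter $S$, and since $|S|=t$ and the paths are internally disjoint, at most $t$ of the paths in $\mathcal{P}$ can use a vertex of $S$; moreover any path that reaches a vertex of $G-S$ outside $C$ must cross $S$ twice, using two distinct vertices of $S$, so at most $\lfloor t/2\rfloor$ paths can wander outside $H$ — but I can do better: each such excursion occupies two vertices of $S$ and contributes nothing that couldn't be shortcut. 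The clean argument is to take a path $P\in\mathcal{P}$ that leaves $H$; let $s,s'$ be the first and last vertices of $S$ on $P$ (in order from $x$); replace the segment of $P$ strictly between its first visit to $s$ and its last visit to $s'$ by the edge $ss'$ (which exists since $S$ is a clique), and if $s=s'$ simply delete the excursion. This rerouting keeps all paths internally disjoint (the new internal vertices $s,s'$ were already internal to $P$, and no new vertices are introduced) and keeps them saturating $x$ (the neighbor of $x$ on each path is unchanged), and now every path lies in $H$. Hence $\kappa_H(x,y)\geq\degr{G}{x}=\mingen{H}{x}{y}$, and the reverse inequality is automatic, giving equality.

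Finally, $\kappa(H)\geq t$: since $H$ is now known to be ideally connected, $\kappa(H)=\min_{x,y\in V(H)}\mingen{H}{x}{y}$, so it suffices to show every vertex of $H$ has degree at least $t$ in $H$. Each $s\in S$ is adjacent to the other $t-1$ vertices of $S$, and $s$ has at least one neighbor in $C$ (else $S\setminus\{s\}$ would already separate $C$ from the rest, contradicting minimality of $|S|=\kappa(G)$ — or more directly, $\degr{H}{s}\geq\degr{H}{x}\geq\kappa(G)$ is not quite immediate, so I would argue: if some $s\in S$ had no neighbor in $C$ then $S\setminus\{s\}$ separates, contradicting $|S|=\kappa(G)$), so $\degr{H}{s}\geq t$. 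Each $x\in V(C)$ satisfies $\degr{H}{x}=\degr{G}{x}\geq\kappa(G)=t$ since the minimum degree of a graph is at least its connectivity. Hence $\kappa(H)\geq t$. The main obstacle is the rerouting step in the $x,y\in V(C)$ case — getting the bookkeeping right so that the modified paths remain internally disjoint and still saturate $x$ — but the clique structure of $S$ makes the shortcut legitimate, so I expect this to go through cleanly.
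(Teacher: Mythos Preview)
Your proposal is correct and follows essentially the same approach as the paper. Both proofs handle the cases $x,y\in S$ and $x\in V(C),\,y\in S$ via Lemmas~\ref{lem s-s} and~\ref{lemma: u-s}, treat the $\kappa(H)\geq t$ claim by the identical minimum-degree argument, and resolve the main case $x,y\in V(C)$ by the same key observation: any $x$--$y$ path that leaves $H$ can be shortcut through a clique edge of $S$ without destroying internal disjointness. The only cosmetic difference is packaging: the paper selects a family of paths whose length tuple is lexicographically minimal and derives a contradiction from any excursion outside $H$, whereas you perform the rerouting explicitly on an arbitrary saturating family; both arguments rely on exactly the same shortcut and the same fact that the rerouted path retains $s,s'\in S$ as internal vertices (so has at least four vertices and stays distinct and internally disjoint from the rest).
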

\begin{proof} Let $H$ be an $S$-subgraph of $G$. We first show that $H$ is ideally connected. Let $u,v \in V(H)$ be distinct vertices of $H$ and let $c=\mingen{H}{u}{v}$. We will show that $\kappa_H(u,v) = c$.

If $u \in V(H) \setminus S$ and $v \in S$ then $\kappa_H(u,v)=\kappa_G(u,v)=\degr{G}{u}=c$ by Lemma \ref{lemma: u-s}. If $u,v \in S$ are distinct vertices in $S$, then $\kappa_H(u,v)=\mingen{H}{u}{v}=c$ by Lemma \ref{lem s-s}.  Hence, it remains to show that for distinct vertices $u,v \in V(H) \setminus S$, we have $\kappa_H(u,v)=\mingen{H}{u}{v}$. Since $\degr{H}{u}=\degr{G}{u}$ and $\degr{H}{v}=\degr{G}{v}$ for $u,v \in V(H)\setminus S$, we must show that $\kappa_H(u,v)=\mingen{G}{u}{v}$.

For $u,v \in V(H) \setminus S$, let $\tilde{\mathcal{P}}$ denote the set of all $c$-tuples $(P_1, \ldots, P_c)$ where $\{ P_1, \ldots, P_c\}$ is a set of $c$ internally disjoint $u$-$v$ paths. Since $G$ is ideally connected, $\tilde{\mathcal{P}}$ is nonempty. Among all tuples in $\tilde{\mathcal{P}}$, choose $\mathcal{P} = (P^*_1, \ldots, P^*_c) \in \tilde{\mathcal{P}}$ so that the corresponding $c$-tuple of lengths $(|P^*_1|, \ldots, |P^*_c|)$ has minimum lexicographical order among all the tuples of lengths from elements of $\tilde{\mathcal{P}}$. The minimality of $\mathcal{P}$ implies that in particular, $|P^*_i| \leq |P^*_j|$ for all $1 \leq i \leq j \leq c$, and $P^*_1$ is a path of length one if and only if $uv \in E(G)$. 

We claim that for each $i \in [c]$, the path $P^*_i$ is a subgraph of $H$. For the sake of contradiction, suppose that this is not the case, meaning that for some $i \in [c]$, the path $P^*_i$ contains a vertex $ z \in V(G) \setminus V(H)$. Then since $S$ is a vertex cutset, there is a subpath $P'_i$ of $P^*_i$ and distinct vertices $s_1, s_2 \in S$ such that $P'_i=s_1P^*_is_2$ and $P'_i$ contains the vertex $z$. Then replacing $P'_i$ with the edge $s_1s_2$ on $P^*_i$, we obtain a new $u$-$v$ path $Q^*_i$ of length strictly less than $|P^*_i|$. The path $Q^*_i$ remains internally disjoint from each path in $\{P^*_1, \ldots, P^*_{i-1}, P^*_{i+1}, \ldots, P^*_c\}$ because $V(Q^*_i) \subseteq V(P^*_i)$. Also, $Q^*_i$ has length greater than one because it still includes $s_1$ and $s_2$ as vertices, so $|V(Q^*_i)| \geq 4$. Since $Q^*_i$ has length greater than one, it is distinct from the path $uv$ if such path exists, so it is distinct from each of the paths in $\{P^*_1, \ldots, P^*_{i-1}, P^*_{i+1}, \ldots, P^*_c\}$. Then $(P^*_1, \ldots, P^*_{i-1}, Q^*_i, P^*_{i+1}, \ldots, P^*_c) \in \tilde{\mathcal{P}}$, contradicting the choice of $\mathcal{P}$. This contradiction establishes that for each $i \in [c]$, the path $P^*_i$ is a subgraph of $H$.

Therefore, the collection $\{ P^*_1, \ldots, P^*_c\}$ is a set of $c=\mingen{H}{u}{v}$ internally disjoint $u$-$v$ paths in $H$, so $\kappa_H(u,v) =c$ and $H$ is ideally connected.

    Now we show $\kappa(H) \geq t$. Since $H$ is ideally connected it suffices to show that minimum degree $\delta(H)$ of $H$ is at least $t$. Since $\kappa(G)=t$, we have $\delta(G) \geq t$, so for any vertex $v \in V(H) \setminus S$, we have $\degr{H}{v}=\degr{G}{v}\geq t$. For a vertex $s \in S$, we have $\degr{H}{s} \geq t$ because $s$ has $t-1$ neighbors in $G[S]$ and by the minimality of the cut $S$, $s$ has at least one neighbor in $H-S$. Hence, $\kappa(H) \geq t$. 
\end{proof}

The conclusion of Lemma \ref{lem: complete cutset} does not hold without the assumption that $G$ has a minimum cutset that is a clique. Indeed, if $S$ is a minimum vertex cut of an ideally connected graph $G$ and $C$ is a component of $G - S$, there may be vertices $u,v \in V(C)$ with a path between them passing through another component of $G-S$. Crucially, if $S$ is not a clique, there may be $u$-$v$ paths leaving $G[V(C) \cup S]$ that cannot be rerouted through $S$, so the subgraph $G[V(C) \cup S]$ might not be ideally connected. It is for this reason that ideal connectedness is harder to analyze in non-chordal graphs, since as we will see in Section \ref{sec: chordal}, such graphs may not have a $\kappa$-clique cut.

We can now give a structure theorem for ideally connected graphs with $\kappa$-clique cuts. The theorem shows that if $G$ is an ideally connected graph with $\kappa$-clique cut $S$, then each $S$-subgraph of $G$ is ideally connected and any two $S$-subgraphs intersect precisely at $S$. Moreover, $G$ is the union of its $S$-subgraphs and satisfies certain degree restrictions.
\begin{theorem} \label{thm: conditions}  Suppose $G$ is a graph with a $\kappa$-clique cut $S$. Let $t = |S|$, let $H_1, \ldots, H_m$ denote the $S$-subgraphs of $G$, and for each $i \in [m]$, let $C_i$ be the component of $G-S$ given by $H_i-S$. 

Then $G$ is ideally connected if and only if each of the following conditions hold:
\begin{theoremenum}
    \item \label{item1} Every $S$-subgraph of $G$ is an ideally connected, $t$-connected graph.
    \item \label{item2} Among each of the subgraphs $C_1, \ldots, C_m$, at most one has a vertex of degree greater than $t$ in $G$. If such a component, say $C_j$, exists, then $\degr{H_j}{u} \leq \degr{H_j}{s} < \degr{G}{s}$ for every $u \in V(H_j)\setminus S$ and $s \in S$. 
    \item  \label{item3} For distinct vertices $s_1, s_2 \in S$ where $\degr{G}{s_1} \leq \degr{G}{s_2}$, we have $\degr{H_i}{s_1} \leq \degr{H_i}{s_2}$ for every $i \in [m]$.
\end{theoremenum}
\end{theorem}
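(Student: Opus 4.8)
The plan is to prove both implications, with essentially all of the content in the converse. For the forward direction, assume $G$ is ideally connected; then (\ref{item1}) is exactly Lemma~\ref{lem: complete cutset}, the first sentence of (\ref{item2}) is Lemma~\ref{lem: subg} while its second sentence is Lemma~\ref{lemma: u-s} applied inside the distinguished $S$-subgraph, and (\ref{item3}) is Lemma~\ref{lem s-s}; so this direction is just an assembly of the preceding lemmas. For the converse I would assume (\ref{item1})--(\ref{item3}), and since $\kappa_G(x,y)\le\mingen{G}{x}{y}$ always holds it suffices to produce, for every pair of distinct vertices $x,y\in V(G)$, a collection of $\mingen{G}{x}{y}$ internally disjoint $x$--$y$ paths (or, in one case, to bound an $x$--$y$ cut from below). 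I would split into three cases according to the positions of $x,y$ relative to $S$ and the components $C_1,\dots,C_m$. In the first case, $x$ and $y$ lie in a common $S$-subgraph $H_i$ and are not both in $S$; since $H_i$ is ideally connected by (\ref{item1}) we get $\kappa_G(x,y)\ge\kappa_{H_i}(x,y)=\mingen{H_i}{x}{y}$, and it remains to check $\mingen{H_i}{x}{y}=\mingen{G}{x}{y}$: if both of $x,y$ lie outside $S$ their degrees in $H_i$ and in $G$ coincide and there is nothing to prove, while if exactly one of them, say $y$, lies in $S$, then the chain $\degr{G}{x}=\degr{H_i}{x}\le\degr{H_i}{y}\le\degr{G}{y}$ holds — directly from (\ref{item2}) if $C_i$ is the distinguished component, and otherwise because $\degr{G}{x}\le t\le\degr{H_i}{y}$ — so both minima equal $\degr{G}{x}$.

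In the second case, $x,y\in S$, say with $\degr{G}{x}\le\degr{G}{y}$; writing $d_i=|N_G(x)\cap V(C_i)|$ we have $\degr{G}{x}=(t-1)+\sum_i d_i$, and I would build this many internally disjoint $x$--$y$ paths by hand. Since $S$ is a clique, the edge $xy$ and the $t-2$ paths $xs'y$ with $s'\in S\setminus\{x,y\}$ are available and pairwise internally disjoint. For each component $C_i$, using (\ref{item1}) and (\ref{item3}) — the latter gives $\degr{H_i}{x}\le\degr{H_i}{y}$ — there is a collection of $\degr{H_i}{x}=(t-1)+d_i$ internally disjoint $x$--$y$ paths in $H_i$ saturating $x$; a pigeonhole count on the first internal vertices shows that exactly $d_i$ of these paths begin by entering $C_i$, and each of those $d_i$ paths must avoid $S\setminus\{x,y\}$ (every vertex there is already interior to the unique path in the collection that starts through it), hence has all of its internal vertices in $C_i$. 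Collecting these $d_i$ paths over all $i$, together with the edge $xy$ and the $t-2$ paths through $S$, yields $(t-1)+\sum_i d_i=\degr{G}{x}=\mingen{G}{x}{y}$ internally disjoint $x$--$y$ paths in $G$.

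In the third case, $x\in V(C_i)$ and $y\in V(C_j)$ with $i\ne j$; by the first sentence of (\ref{item2}) at least one of $C_i,C_j$ has all $G$-degrees at most $t$, so after possibly swapping $x$ and $y$ I may assume $d:=\mingen{G}{x}{y}=\degr{G}{x}\le t$, and I would show by Menger that there is no $x$--$y$ vertex cut of size less than $d$. Suppose $T$ were such a cut and let $A$ be the component of $G-T$ containing $x$, so $y\notin A$. If $A$ contains some vertex $s\in S$, then since $H_j$ is $t$-connected by (\ref{item1}) and $|T|<t$, the graph $H_j-T$ is connected and contains both $s$ and $y$, forcing $y\in A$, a contradiction. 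If $A\cap S=\emptyset$, then $A$ is a connected subgraph avoiding $S$, hence $A\subseteq V(C_i)$, and $N_{H_i}(A)\setminus A\subseteq T$; but this last set either contains $S$ (in particular when $A=V(C_i)$, since every vertex of $S$ has a neighbor in $C_i$) or is a genuine vertex cut of $H_i$, so it has at least $t$ elements, contradicting $|T|<t$. Hence $\kappa_G(x,y)\ge d=\mingen{G}{x}{y}$, which finishes the converse.

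I expect the third case to be the main obstacle, together with the saturation/pigeonhole bookkeeping in the second case. The third case is the only point at which the hypothesis that the cut $S$ is a clique is used in an essential way: it is through the $t$-connectivity of the $S$-subgraphs supplied by (\ref{item1}) that one certifies the absence of a small $x$--$y$ cut, and this is precisely the phenomenon flagged in the discussion following Lemma~\ref{lem: complete cutset}, namely that without clique-ness a path leaving an $S$-subgraph need not be reroutable through $S$. The remaining verifications are routine.
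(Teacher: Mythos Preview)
Your proof is correct and, for the forward direction and your Cases~1 and~2, essentially coincides with the paper's argument. The one substantive divergence is your Case~3 (vertices $x\in C_i$, $y\in C_j$ with $i\neq j$), which you treat via a Menger argument that leans on the $t$-connectivity of $H_i$ and $H_j$ from (\ref{item1}). The paper dispatches this case in a single line: since $S$ is by hypothesis a \emph{minimum} vertex cut of $G$, we have $\kappa(G)=t$, so $\kappa_G(x,y)\ge t$ automatically; and since $\deg_G(x)\le t$ by (\ref{item2}) while $\deg_G(x)\ge\delta(G)\ge\kappa(G)=t$, one gets $\mingen{G}{x}{y}=t$ and the case is done. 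Thus what you call ``the main obstacle'' is in fact the cheapest case, and your closing remark that the clique hypothesis on $S$ is used essentially only here is off target: in the converse direction the clique structure of $S$ enters most directly in Case~2 (to build the length-two paths $xs'y$ through $S$), whereas Case~3 needs only that $S$ is a minimum cut. Your Menger argument is perfectly valid, just more work than necessary.
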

\begin{proof} [Proof of Theorem \ref{thm: conditions}]
    If $G$ is ideally connected, then Conditions \ref{item1}, \ref{item2}, and \ref{item3} follow from Lemmas \ref{lem: subg}, \ref{lemma: u-s}, \ref{lem s-s}, and $\ref{lem: complete cutset}$.

    Conversely, suppose $G$ is a graph with a $\kappa$-clique cut $S$ satisfying Conditions \ref{item1}, \ref{item2}, and \ref{item3} in the theorem statement. We will show that $G$ is ideally connected. Without loss of generality, we may assume $C_1$ is the only component of $G-S$ that might have a vertex of degree larger than $t$ in $G$. 
    
    For any pair of vertices $u,v \in V(G)$ with $v \in V(G) \setminus V(H_1)$, we have $\mingen{G}{u}{v}=t$ by Condition \ref{item1}. Then, since $G$ is $t$-connected, we have $\kappa_G(u,v)=t=\mingen{G}{u}{v}$, as needed.

    Next, for vertices $u \in V(C_1)$ and $s \in S$, Condition \ref{item2} implies that $\mingen{G}{u}{s}=\degr{H_1}{u}$. By Condition \ref{item1}, $H_1$ is ideally connected, so $\kappa_{H_1}(u,s)=\degr{H_1}{u}$. Since $\degr{G}{u}=\degr{H_1}{u}$, we have $\kappa_G(u,s)=\degr{G}{u}$, as needed.

    For distinct vertices $s_1, s_2 \in S$, we may assume without loss of generality that $\degr{G}{s_1} \leq \degr{G}{s_2}$. By Condition \ref{item1}, each $H_i$ is ideally connected, and by Condition \ref{item3}, we have $\degr{H_i}{s_1} \leq \degr{H_i}{s_2}$ for all $i \in [m]$. Then, for each $i \in [m]$, there is a collection of $\degr{H_i}{s_1}$ internally disjoint $s_1$-$s_2$ paths saturating $s_1$. We may assume that among these $\degr{H_i}{s_1}$ paths, exactly one is the length-one path $s_1s_2$ and exactly $t-2$ are the length-two paths of the form $s_1s's_2$ for $s' \in S \setminus \{ s_1, s_2\}$, and so the others are internally disjoint from each other and internally disjoint from $S$. Thus, for each $i \in [m]$, there are $\degr{H_i}{s_1}-(t-1)$ internally disjoint $s_1$-$s_2$ paths that have their (nonempty) interiors entirely in $C_i$. Then, \[ \kappa_G(u,v) \geq (t-1)+\sum_{i=1}^m (\degr{H_i}{s_1}-(t-1))=\degr{G}{s_1}\] as needed.

Finally, for distinct vertices $u,v \in V(H_1) \setminus S$, we have $\degr{H_1}{u}=\degr{G}{u}$ and $\degr{H_1}{v}=\degr{G}{v}$. Since $H_1$ is ideally connected, we have $\kappa_{G}(u,v) \geq \kappa_{H_1}(u,v)=\mingen{H_1}{u}{v}$. Hence, $\kappa_G(u,v)=\mingen{G}{u}{v}$.

Therefore, we have $\kappa_G(u,v) = \mingen{G}{u}{v}$ for any distinct vertices $u,v \in V(G)$, so $G$ is ideally connected.
\end{proof}

For instance, the theorem implies that the ideally connected graphs of connectivity $1$ on $n$ vertices can be obtained as follows: take an ideally connected graph $H$ on $|V(H)|<n$ vertices, choose a vertex $v$ of largest degree in $H$, and then add $n-|V(H)|$ vertices of degree one, each adjacent only to $v$. Similarly, one can view ideally connected graphs of connectivity two as arising from an ideally connected, $2$-connected graph $H$ by adding  internally disjoint $u$-$v$ paths of length at least two to $H$ where $u,v \in V(H)$ are adjacent vertices in $H$ and have largest degree in $H$. See Figure \ref{fig: k-clique cut}.
In general, Theorem \ref{thm: conditions} shows that any ideally connected graph with a $\kappa$-clique cut $S$ of size $t$ can be obtained by taking $t$-connected graphs $H_1, \ldots, H_m$, which each contain a clique isomorphic to $S$, and gluing them together along the clique $S$ in a way that satisfies the degree conditions of Theorem \ref{thm: conditions}.

\begin{figure}[h]
\begin{center}
\begin{tikzpicture}[scale=1.2,
    every node/.style={draw, circle, fill=white, inner sep=1.5pt},
    node distance=1cm
]

\begin{scope}[shift={(-0.5,0)}]
\node (c1) at (-3, 0.8) {};
\node (c2) at (-3, -0.8) {};
\node (s1) at (-2, 1) {$s_1$};
\node (s2) at (-2, -1) {$s_2$};
\node (v1) at (-1, 1.5) {};
\node (v2) at (0, 1.2) {};
\node (a1) at (-1, 0.6) {};
\node (a2) at (0.3, -0.2) {};
\node (a3) at (-0.4, 0.3) {};
\node (b1) at (0, -0.8) {};

\draw (c1) -- (s1);
\draw (c1) -- (c2);
\draw (c1) -- (s2);
\draw (c2) -- (s1);
\draw (c2) -- (s2);
\draw (s1) -- (s2);
\draw (s1) -- (v1);
\draw (v1) -- (v2);
\draw (v2) -- (s2);
\draw (s1) -- (a1);
\draw (a1) -- (a3);
\draw (a2) -- (a3);
\draw (a2) -- (s2);
\draw (s1) -- (b1);
\draw (s2) -- (b1);


\end{scope}

\begin{scope}[shift={(1,0)}]
\filldraw[color=red!60, fill=red!5, very thick](1, -1) rectangle (3.5, 1);
\node (qs1) at (3.5, 1) {$s_1$};
\node (qs2) at (3.5, -1) {$s_2$};

\node[draw=none, fill opacity=0, text opacity=1] at (2.25, 0.5) {$H_1$};
\node[draw=none, fill opacity=0, text opacity=1] at (2.25, 0) {$\text{ideally connected}$};

\node (as1) at (4.5, 2) {$s_1$};
\node (as2) at (4.5, 1.4) {$s_2$};
\node (bs1) at (4.5, 0.4) {$s_1$};
\node (bs2) at (4.5, -0.2) {$s_2$};
\node (cs1) at (4.5, -1.2) {$s_1$};
\node (cs2) at (4.5, -1.8) {$s_2$};

\draw (qs1) -- (qs2);
\draw [blue](as1) -- (as2);
\draw [blue](bs1) -- (bs2);
\draw [blue](cs1) -- (cs2);
\draw [blue] (as1) .. controls +(2,0) .. (as2);
\draw [blue] (bs1) .. controls +(3,0) .. (bs2);
\draw [blue] (cs1) .. controls +(1,0) .. (cs2);

\node[rectangle, text=blue, draw=none, minimum size=1pt] at (6, 1.3) {$H_2$};
\node[rectangle, text=blue, draw=none, minimum size=1pt] at (6, -0.2) {$H_3$};
\node[rectangle, text=blue, draw=none, minimum size=1pt] at (6, -1.4) {$H_4$};

\end{scope}
\end{tikzpicture}
\end{center}
\caption{An example showing how an ideally connected graph with a $\kappa$-clique cut $S$ is obtained by gluing together smaller ideally connected graphs along $S$. On the left is an ideally connected graph $G$ with a $\kappa$-clique cut $S = \{ s_1, s_2\}$. On the right, we see how $G$ arises by gluing together ideally connected graphs along a clique of size two, according to Theorem \ref{thm: conditions}. The graphs $H_1, \ldots, H_4$ are the $S$-subgraphs of $G$, and $H_1$ is ideally connected and of connectivity at least two while $H_2$, $H_3$, and $H_4$ are $2$-regular and ideally connected, so are cycles. }
\label{fig: k-clique cut}
\end{figure}
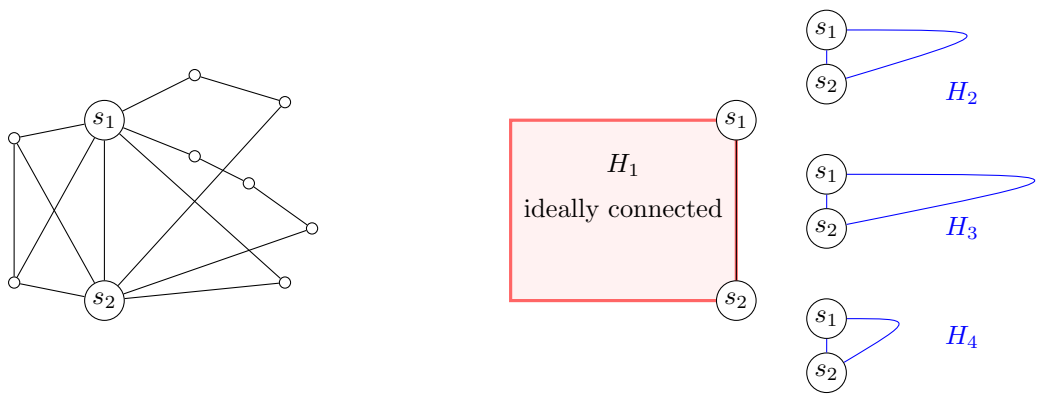

Conversely, if $G$ is a graph obtained by gluing together certain ideally connected graphs, it is natural to ask for a sufficient condition for $G$ itself to be ideally connected. When $G$ is obtained by gluing ideally connected graphs along a clique, satisfying the degree conditions of Theorem \ref{thm: conditions}, then the resulting graph is ideally connected. Figure \ref{fig: k-clique cut} illustrates this for a graph of connectivity two.

\section{Ideally Connected Chordal Graphs} \label{sec: chordal}
In this section we apply Theorem \ref{thm: conditions} to characterize the ideally connected chordal graphs. Recall that a graph is called \textit{chordal} if it has no induced cycle of length larger than three. There are several known characterizations of chordal graphs, one of which is the following. 
\begin{theorem} [Dirac \cite{Dirac1961}] \label{dirac} Let $G$ be a graph not isomorphic to a complete graph. Then $G$ is chordal if and only if every minimum vertex cutset in $G$ is a $\kappa$-clique cut. Moreover, if $G$ is chordal and $S$ is a $\kappa$-clique cut in $G$, then every $S$-subgraph of $G$ is chordal.
\end{theorem}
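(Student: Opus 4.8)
This is Dirac's classical theorem \cite{Dirac1961}, so the plan is to reconstruct its standard proof, putting the weight on the implication actually invoked later in the paper: that every minimum vertex cutset of a chordal graph induces a clique. Suppose $G$ is chordal and not complete, let $S$ be a vertex cutset with $|S| = \kappa(G)$, and let $C_1, \ldots, C_k$ with $k \geq 2$ be the components of $G - S$. First I would record that $S$ is inclusion-minimal among cutsets, since any proper subset of $S$ would be a cutset of size strictly less than $\kappa(G)$. From this I would deduce that every $s \in S$ has a neighbor in each $C_i$: otherwise $C_i$ would remain a full component of $G - (S \setminus \{s\})$, so $S \setminus \{s\}$ would already be a cutset, contradicting inclusion-minimality.

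Next, take distinct $x, y \in S$ and suppose for contradiction that $xy \notin E(G)$. Since $C_1$ is connected and contains both a neighbor of $x$ and a neighbor of $y$, there is an $x$-$y$ path whose interior lies in $C_1$; pick a shortest such path $P_1$, and similarly a shortest $x$-$y$ path $P_2$ with interior in $C_2$. Gluing $P_1$ and $P_2$ at $x$ and $y$ produces a genuine cycle $Z$, since the interiors lie in the disjoint sets $C_1$ and $C_2$, and $Z$ has length at least four because each of $P_1, P_2$ has nonempty interior and $x \not\sim y$. Chordality then gives a chord $e$ of $Z$, and I would eliminate every possibility for $e$: if both endpoints of $e$ lie on $P_1$ then, since $e \neq xy$, the chord yields a strictly shorter $x$-$y$ path with interior in $C_1$, contradicting the choice of $P_1$; symmetrically if both endpoints lie on $P_2$; if one endpoint is interior to $P_1$ and the other interior to $P_2$, then $e$ joins a vertex of $C_1$ to a vertex of $C_2$, impossible as these are in different components of $G - S$; and a chord from $x$ or $y$ to an interior vertex of $P_1$ (respectively $P_2$) again shortcuts $P_1$ (respectively $P_2$). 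This contradiction forces $xy \in E(G)$, so $S$ is a clique, hence a $\kappa$-clique cut since $G$ is connected, non-complete, and $|S| = \kappa(G)$.

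For the converse I would argue by contrapositive in the standard way: if $G$ is not chordal, take a shortest induced cycle $D = u_0 u_1 \cdots u_{\ell - 1} u_0$ with $\ell \geq 4$, so $u_0 \not\sim u_2$. Any set separating $u_0$ from $u_2$ must meet the path $u_0 u_1 u_2$ as well as the path $u_0 u_{\ell - 1} \cdots u_3 u_2$, so an inclusion-minimal $u_0$-$u_2$ separator contains $u_1$ together with some $u_j$ for $3 \leq j \leq \ell - 1$, and $u_1 \not\sim u_j$ because $D$ is induced with $\ell \geq 4$; hence $G$ has a minimal vertex separator that is not a clique, which is the classical minimal-separator characterization underlying the statement. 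Finally, the ``moreover'' clause is immediate: each $S$-subgraph $G[V(C_i) \cup S]$ is an induced subgraph of $G$, and any induced cycle of length at least four in an induced subgraph is also one in $G$, so chordality is inherited. The only step demanding real care is the exhaustive elimination of the chords of $Z$ in the forward direction; everything else is inclusion-minimality bookkeeping and heredity.
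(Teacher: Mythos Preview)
The paper does not prove this theorem; it is quoted from Dirac and used as a black box, so there is no proof in the paper to compare against. Your reconstruction of the forward direction (chordal $\Rightarrow$ every minimum cutset is a clique) is the standard argument and is correct, and this is the only direction the paper ever invokes. The ``moreover'' clause is, as you say, just heredity of chordality under induced subgraphs.

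The converse deserves a comment, because the paper's formulation is not quite Dirac's. Dirac's theorem characterizes chordal graphs via \emph{minimal} (inclusion-minimal) vertex separators, whereas the paper's statement speaks of \emph{minimum} (size-$\kappa(G)$) cutsets, and the distinction matters. Take $C_4$ together with a pendant vertex attached to one cycle vertex $a$: this graph is not chordal, yet its unique minimum cutset is $\{a\}$, which is trivially a clique. So the ``if'' direction, read literally as printed, is false. Your converse argument correctly produces a \emph{minimal} $u_0$--$u_2$ separator that is not a clique, which is exactly Dirac's statement, and you flag this yourself (``the classical minimal-separator characterization underlying the statement''); but you do not, and cannot, conclude that some \emph{minimum} cutset fails to be a clique. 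Since the paper never uses the converse, nothing downstream is affected, but you should be aware that what you have proved is the correct classical theorem rather than the paper's literal wording.
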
 

Recall that a vertex $v$ in a chordal graph $G$ is called \textit{simplicial} if its neighborhood $N_G(v)$ is a clique in $G$. By applying Theorem \ref{thm: conditions}, we obtain the following lemma.

\begin{lemma} \label{chordal lem}Let $G$ be a chordal graph not isomorphic to a complete graph, and let $S$ be a $\kappa$-clique cut in $G$ with $t=|S|$. Then $G$ is ideally connected if and only if the $S$-subgraphs of $G$ can be listed as $H_1, \ldots, H_m$ such that the following properties hold:
\begin{theoremenum}
    \item $H_1$ is an ideally connected, $t$-connected chordal graph.\label{cond1}
    \item For each $i \in \{2,3,\ldots, m\}$, $H_i$ consists of the complete graph $G[S]$ along with a single vertex $v_i$ that is simplicial to $S$. \label{cond2}
\end{theoremenum}
\end{lemma}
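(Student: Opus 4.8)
The plan is to apply Theorem~\ref{thm: conditions} and Dirac's theorem (Theorem~\ref{dirac}) and then show that the three conditions of Theorem~\ref{thm: conditions}, specialized to the chordal setting, are equivalent to the two conditions in the statement. Since $G$ is chordal and not complete, the $\kappa$-clique cut $S$ exists by Theorem~\ref{dirac}, and every $S$-subgraph $H_i$ is chordal. So $G$ is ideally connected iff Conditions \ref{item1}, \ref{item2}, \ref{item3} hold. The work is to translate these.

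First I would observe that the key structural fact specific to chordal graphs is: \emph{a $t$-connected chordal graph on more than $t+1$ vertices cannot be one of the ``small'' $S$-subgraphs that appear here}. More precisely, I claim that under Conditions \ref{item1}--\ref{item3}, at most one $S$-subgraph, say $H_1$, can have a vertex outside $S$ of degree $> t$ in $G$, and every other $S$-subgraph $H_i$ must be exactly $G[S]$ plus a single simplicial vertex. To see the forward direction: suppose $H_i$ ($i \neq 1$) is an $S$-subgraph none of whose vertices in $C_i = H_i - S$ has degree exceeding $t$ in $G$. Every vertex $v \in V(C_i)$ has $\degr{G}{v} = \degr{H_i}{v} \le t$; but $H_i$ is $t$-connected by Condition \ref{item1}, forcing $\degr{H_i}{v} \ge t$, hence $\degr{H_i}{v} = t$ for all $v \in V(C_i)$. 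Now I would use chordality: $H_i$ is a $t$-connected chordal graph in which every vertex of $C_i$ has degree exactly $t$. A chordal graph has a simplicial vertex; if $H_i$ is not complete, it has at least two non-adjacent simplicial vertices, and a simplicial vertex of degree $t$ in a $t$-connected graph has a closed neighborhood that is a clique on $t+1$ vertices forming a $K_{t+1}$ component-like block. The cleanest route: show $V(C_i)$ is a single vertex. Indeed, pick a simplicial vertex $w$ of $H_i$; if $w \in V(C_i)$, then $N_{H_i}(w)$ is a clique of size $t$, and since $|S| = t$ and $S$ separates, one checks $N_{H_i}(w) = S$ unless $C_i$ is larger, in which case a second non-adjacent simplicial vertex plus counting gives a contradiction with $t$-connectivity (removing $N_{H_i}(w) \setminus \{$something$\}$ or using that $C_i$ connected forces everything into one clique). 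Thus $C_i = \{v_i\}$ with $N_G(v_i) = S$, giving Condition \ref{cond2}; and $H_1$ being ideally connected, $t$-connected, and chordal is exactly Condition \ref{cond1}.

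For the converse, I would show that if the $S$-subgraphs are $H_1$ (ideally connected, $t$-connected, chordal) and $H_2, \ldots, H_m$ each equal to $G[S] + v_i$ with $v_i$ simplicial to $S$, then Conditions \ref{item1}, \ref{item2}, \ref{item3} of Theorem~\ref{thm: conditions} all hold, so $G$ is ideally connected. Condition \ref{item1} is immediate (each $H_i$, $i \ge 2$, is $t$-connected since $v_i$ has degree $t$ into the clique $S$ and $G[S]$ is a clique on $t$ vertices, so $H_i$ is actually a clique $K_{t+1}$, which is $t$-connected, ideally connected, and chordal). Condition \ref{item2}: only $C_1$ can have a vertex of degree $> t$ (the lone vertices $v_i$ have degree exactly $t$), and the degree inequality $\degr{H_1}{u} \le \degr{H_1}{s} < \degr{G}{s}$ needs to be verified --- this is where I would need to invoke ideal connectedness of $H_1$ together with $S$ being a $\kappa$-clique cut, essentially re-deriving Lemma~\ref{lemma: u-s} inside $H_1 \cup H_2$, or more directly noting $\degr{G}{s} = \degr{H_1}{s} + (m-1)$ and $\degr{G}{u} = \degr{H_1}{u}$, so I must show $\degr{H_1}{u} \le \degr{H_1}{s}$ for $u \in V(H_1) \setminus S$; this follows from applying Lemma~\ref{lemma: u-s} to the graph $G$ itself if $H_1$ contributes the high-degree component, or needs a short separate argument. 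Condition \ref{item3}: for $s_1, s_2 \in S$ with $\degr{G}{s_1} \le \degr{G}{s_2}$, since each $v_i$ ($i \ge 2$) is adjacent to both $s_1$ and $s_2$, we have $\degr{H_i}{s_1} = \degr{H_i}{s_2} = t$, so the inequality is trivial for $i \ge 2$; and $\degr{G}{s_j} = \degr{H_1}{s_j} + (m-1)$, so $\degr{G}{s_1} \le \degr{G}{s_2}$ gives $\degr{H_1}{s_1} \le \degr{H_1}{s_2}$ directly.

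The main obstacle I anticipate is the structural claim in the forward direction: pinning down that a $t$-connected chordal graph $H_i$ in which every non-$S$ vertex has degree exactly $t$ (and $C_i$ is connected, attached to $S$) must have $C_i$ a single vertex. The subtlety is ruling out larger configurations --- e.g., $C_i$ being a path or a larger clique --- using a combination of the simplicial-vertex existence in chordal graphs, the $t$-connectivity lower bound on degrees, and a counting argument on $|S| = t$. I would handle this by taking a perfect elimination ordering of $H_i$ and peeling off simplicial vertices of $C_i$: the first one eliminated, $w$, has $N_{H_i}(w)$ a clique of size exactly $t$; if $N_{H_i}(w) \ne S$ then $N_{H_i}(w)$ contains a vertex $w' \in C_i$, and after removing $w$ the graph $H_i - w$ may lose $t$-connectivity --- but actually since every vertex of $C_i$ has degree $t$, removing $w$ drops $\deg(w')$ to $t-1$, and then $N_{H_i-w}[w']$ is too small; iterating forces $C_i$ down to one vertex. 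I will need to write this peeling argument carefully, but it is elementary chordal-graph bookkeeping once set up correctly.
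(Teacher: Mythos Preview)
Your forward direction takes a different route from the paper's but is fundamentally sound. The paper observes that each $H_i$ with $i\ge 2$ is chordal with $\kappa(H_i)=t$ and every vertex of $C_i$ of degree exactly $t$, and then applies Theorem~\ref{dirac} directly: if $H_i$ is not complete it has a $\kappa$-clique cut $U$ of size $t$; any vertex of $U\setminus S$ would lie in $C_i$ yet have degree at least $t+1$, so $U=S$, contradicting connectedness of $H_i-S=C_i$. Your simplicial-vertex approach also works---once you pick a simplicial $w\in C_i$ (one exists since two non-adjacent simplicial vertices of a non-complete $H_i$ cannot both lie in the clique $S$), the closed neighbourhood $N_{H_i}[w]$ is a $(t{+}1)$-clique, and any second vertex of $C_i$ inside it would make $N_{H_i}[w]\cap S$ a separator of size at most $t-1$, contradicting $t$-connectivity. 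The paper's route is shorter and avoids the case bookkeeping you anticipate, but both arguments are correct.

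Your converse direction has a genuine gap, and it is precisely the step you flagged. To verify Condition~\ref{item2} of Theorem~\ref{thm: conditions} you must show $\deg_{H_1}(u)\le\deg_{H_1}(s)$ for every $u\in V(H_1)\setminus S$ and $s\in S$. Your first suggestion, applying Lemma~\ref{lemma: u-s} to $G$, is circular since that lemma assumes $G$ is ideally connected. Your second suggestion, a ``short separate argument,'' cannot succeed, because the inequality is simply not implied by Properties~\ref{cond1} and~\ref{cond2}. Take $t=1$, $S=\{s\}$, let $H_1$ be the path $sab$ with $s$ a leaf (ideally connected, $1$-connected, chordal), and let $H_2$ be the edge $sv_2$; then $G$ is the path $v_2sab\cong P_4$, which satisfies Properties~\ref{cond1} and~\ref{cond2} yet is not ideally connected, since $\kappa_G(s,a)=1<2=\min\{\deg_G(s),\deg_G(a)\}$. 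So the converse of the lemma, as stated, is actually false. The paper's own proof of this converse is the phrase ``it is easy to see,'' so it shares the gap; fortunately the converse of Lemma~\ref{chordal lem} is never invoked in the proof of Theorem~\ref{thm: ic chordal iff threshold}.
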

\begin{proof} Let $G$ be a chordal graph not isomorphic to a complete graph and with a $\kappa$-clique cut $S$. First suppose $G$ is ideally connected. Then by Theorem \ref{thm: conditions}, we can list the $S$-subgraphs of $G$ as $H_1, \ldots, H_m$ such that the following conditions hold:
\begin{enumerate}
    \item $H_i$ is ideally connected and $\kappa(H_i) \geq t$ for each $i$. \label{a}
    \item For each $i \in \{ 2,3,\ldots, m\}$, we have $\kappa(H_i)=t$ and every vertex in $V(H_i)\setminus S$ has degree exactly $t$ in $G$. For any vertices $u \in V(H_1)\setminus S$ and $s \in S$, we have $\degr{H_1}{u} \leq \degr{H_1}{s}<\degr{G}{s}$. \label{c}
    \item For distinct vertices $s_1, s_2 \in S$ where $\degr{G}{s_1} \leq \degr{G}{s_2}$, we have $\degr{H_i}{s_1} \leq \degr{H_i}{s_2}$ for every $i \in [m]$. \label{d}
\end{enumerate} 

By Condition \ref{a} and Theorem \ref{dirac}, $H_1$ is ideally connected, $t$-connected, and chordal, which proves Property \ref{cond1}.
Now fix $i \in \{ 2,3,\ldots,c \}$. We claim that $H_i$ is isomorphic to the complete graph $K_{t+1}$. By Theorem \ref{dirac}, $H_i$ is chordal and so either $H_i$ is isomorphic to a complete graph or $H_i$ has a $\kappa$-clique cut. In the former case, Condition \ref{c} implies that $H_i$ must be isomorphic to $K_{t+1}$. 

We claim that the latter case cannot hold. Suppose for the sake of contradiction that for some $i \in \{ 2,3, \ldots, m\}$, the subgraph $H_i$ is not isomorphic to a complete graph. Then $H_i$ has a $\kappa$-clique cut $U$ of size $t$, so $H_i - U$ has at least two components. If $V(U) \setminus S$ contains a vertex $z$, then $z$ has $t-1$ neighbors in $U$ and has neighbors to each component of $H_i-U$, so $z$ has degree at least $t+1$ in $G$, contradicting Property \ref{c}. Hence, we must have $U \subseteq S$ and so since $|U|=t=|S|$, we must have $U=S$. But since $H_i-S$ is connected, the subgraph $H_i-U$ is also connected, which is a contradiction to the fact that $H_i-U$ has at least two components. 

Therefore, for each $i \in \{ 2,3, \ldots, m\}$, the subgraph $H_i$ is isomorphic to the complete graph $K_{t+1}$, so $H_i$ consists of $G[S]$ along with a single vertex $v_i$ that is simplicial to $S$. This proves Property \ref{cond2}.


Conversely, if $G$ is a chordal graph not isomorphic to a complete graph and $S$ is a $\kappa$-clique cut in $G$, and moreover $G$ satisfies Property \ref{cond1} and Property \ref{cond2}, then it is easy to see that $G$ satisfies the conditions from Theorem \ref{thm: conditions} that guarantee that $G$ is ideally connected.
\end{proof}

We can now prove Theorem \ref{thm: ic chordal iff threshold}, that the ideally connected chordal graphs are precisely the threshold graphs.

\begin{proof}[Proof of Theorem \ref{thm: ic chordal iff threshold}]
Let $G$ be a chordal graph. If $G$ is a threshold graph, then by Corollary \ref{cor: threshold ic}, $G$ is ideally connected.

Hence, it only remains to show that any ideally connected chordal graph is a threshold graph. Suppose that $G$ is ideally connected.
We will prove that $G$ is a threshold graph by induction on $|V(G)|$. Clearly, the theorem holds if $|V(G)| \leq 4$ or if $G$ is isomorphic to a complete graph, so we may assume $|V(G)| \geq 5$ and that  $G$ has a $\kappa$-clique cut $S$. 

Let $t=|S|$. By Lemma \ref{chordal lem}, the $S$-subgraphs of $G$ can be listed as $H_1, \ldots H_m$ such that the following properties hold:
\begin{enumerate}
    \item $H_1$ is an ideally connected, $t$-connected chordal graph.
    \item For each $i \in \{2,3,\ldots, m\}$, $H_i$ consists of $G[S]$ along with a single vertex $v_i$ that is simplicial to $S$. 
\end{enumerate}

If $H_1$ is isomorphic to a complete graph, then letting $K$ be the vertices in $V(H_1)$ and $I$ the vertices $v_2,v_3, \ldots, v_m$, we see that $G$ is $(K,I)$-split and all vertices in $I$ have the same neighborhood, so $G$ is a threshold graph.

Thus, we may assume that $H_1$ is not isomorphic to a complete graph, so by Theorem \ref{dirac}, $H_1$ has a $\kappa$-clique cut $S'$.

We claim that $V(S')$ contains $V(S)$. Suppose for the sake of contradiction that this is not the case, so  since $|S'| \geq t$, there are vertices $s \in S \setminus S'$ and $s' \in S' \setminus S$. Since $H_1$ is ideally connected and has $S'$ as a $\kappa$-clique cut, Lemma \ref{lemma: u-s} implies $\degr{H_1}{s} < \degr{H_1}{s'}$. However, since $S$ is a $\kappa$-clique cut in $G$ containing $s$ but not containing $s'$, Lemma \ref{lemma: u-s} also 
implies that $\degr{H_1}{s'} \leq \degr{H_1}{s}$, a contradiction. Hence, $V(S')$ contains $V(S)$.

Since $|V(H_1)| <|V(G)|$, we may apply the inductive hypothesis to $H_1$ to conclude that $H_1$ is a threshold graph. So, there is a clique $K'$ and independent set $I'$ in $H_1$ such that $H_1$ is $(K', I')$-split and the vertices of $I'$ have the nested neighborhood property in $H_1$. Since $H_1$ is not isomorphic to a complete graph, we can choose $K'$ and $I'$ so that for any vertex $v' \in I'$, the neighborhood $N_{H_1}(v')$ of $v'$ in $H_1$ is a proper subset of $K'$. In particular, we may choose $K'$ and $I'$ so that $S \subseteq S' \subseteq K'$. Hence, since $S'$ is a $\kappa$-clique cut of $H_1$, for every vertex $v' \in I'$ we have $S \subseteq S' \subseteq N_{H_1}(v')$. Then, taking $K=K'$ and adding the vertices $v_2, \ldots, v_m$ to $I'$ to produce the independent set $I$, we see that $G$ is $(K,I)$-split and the vertices of $I$ have the nested neighborhood property in $G$. Hence, $G$ is a threshold graph.

\end{proof}

Let $u,v \in V(G)$ be distinct vertices of an ideally connected chordal graph $G$ where $\degr{G}{u} \leq \degr{G}{v}$. From Theorem \ref{thm: ic chordal iff threshold}, we see that if $\degr{G}{u} < \degr{G}{v}$ then $N_G(u) \subseteq N_G[v]$, and if $\degr{G}{u} = \degr{G}{v}$ then either $N_G(u)=N_G(v)$ or $N_G[u]=N_G[v]$. 
As a result, for any distinct vertices $u,v$ in an ideally connected chordal graph $G$, we can not only conclude that $\mingen{G}{u}{v}$ internally disjoint $u$-$v$ paths exist in $G$, but we can also guarantee that each of these paths have length at most two, and therefore that these paths can be found very efficiently. Indeed, for any two vertices $u,v$ in a threshold graph $G$, the collection of length-two paths $\{ u zv: z\in N_G(u) \cap N_G(v)\}$, along with the length-one path $uv$ if it exists, gives a collection $\mingen{G}{u}{v}$ internally disjoint $u$-$v$ paths in $G$. This is related to the fact that recognizing if a graph is threshold can be done in linear time \cite{heggernes}.

\section{Clique Trees of Threshold Graphs}\label{section: clique trees}
In this section, we further describe threshold graphs within the class of chordal graphs by characterizing the clique trees of threshold graphs. 

We first state some terminology related to intersection graphs and clique trees.
For a family $\mathcal{C}= \{ S_1, \ldots, S_m \}$ of $m$ sets, the \textit{intersection graph} of $\mathcal{C}$ is the graph $G$ with vertices $V(G)=\{ v_1, \ldots, v_m\}$ and edges $E(G) = \{ v_iv_j: i \neq j \text { and } S_i \cap S_j \neq \emptyset \}$. Gavril \cite{Gavril1974} proved that a graph is chordal if and only if it is the intersection graph of a family of subtrees of a tree.
For a chordal graph $G$ and a tree $T$, let $T \in \mathcal{T}_G$ if $G$ can be represented as the intersection graph of a family of subtrees of $T$. Since $\mathcal{T}_G$ is infinite, it is useful to only consider the trees in $\mathcal{T}_G$ of minimum size, in which case the nodes of the trees correspond to maximal cliques of $G$. This reasoning motivates the concept a \textit{clique tree}, a tree that has maximal cliques in $G$ as its nodes and where for any $v \in V(G)$, the subgraph of the tree induced by the nodes containing $v$ is connected. More precisely, for a chordal graph $G$ we let $\mathcal{M}_G$ denote the set maximal cliques of $G$, and we say that the pair $(T, \sigma)$ is a \textit{clique tree pair} of graph $G$ if $T$ is a tree and $\sigma: V(T) \rightarrow \mathcal{M}_G$ is a bijection such that for any $v \in V(G)$, the subgraph of $T$ induced by the nodes in $\{ u \in V(T): v \in \sigma(u) \}$ is a subtree of $T$. Clearly, a graph $G$  has tree $T$ as a clique tree if any only if it has $(T, \sigma)$ as a clique tree pair for some bijection $\sigma: V(T) \rightarrow \mathcal{M}_G$. 

We can now state Gavril's characterization of chordal graphs in terms of intersection graphs and clique trees.

\begin{theorem}[Gavril \cite{Gavril1974}]\label{gavril} \label{intersection} For a graph $G$, the following are equivalent:
\begin{itemize}
    \item $G$ is chordal.
\item $G$ has a clique tree.\label{p3}
    \item $G$ is an intersection graph of a family of subtrees of a tree. 
\end{itemize}
\end{theorem}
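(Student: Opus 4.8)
The plan is to establish the equivalence through the cyclic chain of implications: ``$G$ has a clique tree'' $\Rightarrow$ ``$G$ is the intersection graph of subtrees of a tree'' $\Rightarrow$ ``$G$ is chordal'' $\Rightarrow$ ``$G$ has a clique tree''. The first two implications are short; the substantive step is the last one, and the delicate point there is tracking how the family $\mathcal{M}_G$ of maximal cliques changes when a simplicial vertex is deleted.

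For the first implication, given a clique tree pair $(T,\sigma)$ of $G$, for each $v\in V(G)$ I set $T_v$ to be the subgraph of $T$ induced by the nodes $u$ with $v\in\sigma(u)$; this is nonempty since $\{v\}$ extends to a maximal clique, and connected by the definition of a clique tree pair. One then checks that $vw\in E(G)$ if and only if $T_v$ and $T_w$ share a node: a common node $u$ forces $v,w\in\sigma(u)$, a clique; conversely $\{v,w\}$ extends to a maximal clique $M$ and $\sigma^{-1}(M)$ lies in both subtrees. Hence $G$ is the intersection graph of $\{T_v:v\in V(G)\}$. For the second implication, suppose $G$ is the intersection graph of subtrees $\{T_v\}$ of a tree $T$ and, toward a contradiction, that $G$ has an induced cycle $v_0v_1\cdots v_{k-1}v_0$ with $k\ge4$. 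For each $i\in\mathbb{Z}_k$ pick $x_i\in V(T_{v_i})\cap V(T_{v_{i+1}})$ so as to minimize the total length of the tree-paths $P_i$ from $x_{i-1}$ to $x_i$; each $P_i$ lies in $T_{v_i}$ by connectedness. A leaf $\ell$ of $\bigcup_i P_i$ cannot be an interior node of any $P_i$, so $\ell=x_j$ for some $j$. If both $P_j$ and $P_{j+1}$ were nontrivial, their common neighbor $w$ of $\ell$ in $\bigcup_i P_i$ would lie on both paths, so $w\in V(T_{v_j})\cap V(T_{v_{j+1}})$, and resetting $x_j:=w$ would strictly shorten the total — impossible by minimality; hence $P_j$ or $P_{j+1}$ is trivial, so $\ell=x_j$ lies in $T_{v_{j-1}}$ or in $T_{v_{j+2}}$ as well, producing the chord $v_{j-1}v_{j+1}$ or $v_jv_{j+2}$ of the cycle, contradicting $k\ge4$. (One could instead run this through the Helly property of subtrees of a tree.)

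For the last implication I would induct on $|V(G)|$, the one-vertex case being a single-node tree. Given $|V(G)|\ge2$, pick a simplicial vertex $v$ of $G$ — one exists by the classical fact (derivable from Theorem~\ref{dirac}) that every chordal graph has a simplicial vertex — and note $G'=G-v$ is chordal, so by induction it has a clique tree pair $(T',\sigma')$. Writing $N=N_G(v)$, I split into cases according to how $N$ sits in $G'$: (a) if $v$ is isolated, then $\mathcal{M}_G=\mathcal{M}_{G'}\cup\{\{v\}\}$ and I attach a new leaf node mapping to $\{v\}$; (b) if $N$ is itself a maximal clique of $G'$, then $\mathcal{M}_G=(\mathcal{M}_{G'}\setminus\{N\})\cup\{N\cup\{v\}\}$ and I keep $T=T'$ but relabel the node $(\sigma')^{-1}(N)$ to $N\cup\{v\}$; (c) if $N$ is properly contained in a maximal clique $M$ of $G'$, then $\mathcal{M}_G=\mathcal{M}_{G'}\cup\{N\cup\{v\}\}$ and I attach a new leaf node for $N\cup\{v\}$ adjacent to $(\sigma')^{-1}(M)$. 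In each case the clique tree conditions are checked directly: $v$ lies in exactly one node; for each $w\in N$ the nodes containing $w$ form a subtree of $T'$ that is either unchanged or has a leaf appended next to a node it already contained, hence still a subtree; and the nodes containing any vertex outside $N_G[v]$ are untouched.

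I expect the main obstacle to be pinning down $\mathcal{M}_G$ in terms of $\mathcal{M}_{G'}$ in that last step: one must argue that no maximal clique of $G'$ other than possibly $N$ itself can be contained in $N$ (so the only clique that changes is the one equal to, or strictly containing, $N$), and that $N\cup\{v\}$ really is a maximal clique of $G$. Once the case split is established, verifying the clique-tree axioms in each case, as well as the two easy implications, is routine bookkeeping.
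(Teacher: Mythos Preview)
Your proof is correct, but there is nothing in the paper to compare it against: Theorem~\ref{gavril} is quoted from Gavril's 1974 paper and stated without proof, as a piece of background used to motivate clique trees. So you have supplied a full proof of a classical result that the paper simply cites.

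For what it is worth, your argument follows one of the standard routes. The implication from clique trees to subtree intersection graphs is immediate, and your minimality argument for ``subtree intersection $\Rightarrow$ chordal'' is clean; the alternative you mention via the Helly property of subtrees is the other common way to do it. The inductive construction of a clique tree via simplicial elimination is also standard, and your case split on whether $N_G(v)$ is empty, a maximal clique of $G-v$, or properly contained in one is exactly right. The point you flag as delicate --- that no maximal clique of $G-v$ other than possibly $N$ itself can sit inside $N$, so that $\mathcal{M}_G$ differs from $\mathcal{M}_{G-v}$ in at most one element --- is indeed the only place one has to be careful, and your reasoning there is sound.
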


Since the threshold graphs form a subclass of chordal graphs, it is natural to ask what the possible clique trees of threshold graphs are. For the class of split graphs, which is a superclass of threshold graphs, McMorris and Shier \cite{McMorris1983} showed that a chordal graph is a split graph if and only if it has a star graph (the bipartite graph $K_{1,n}$ for some positive integer $n$) as a clique tree. Therefore, any threshold graph has a star graph as a clique tree.

In order to state the characterization of the clique trees of threshold graphs, we need one more piece of notation. 
For a threshold graph $G$ with set $\mathcal{M}_G$ of maximal cliques, for positive integer $j$ we let $\mathcal{K}_G^j = \{ M \in \mathcal{M}_G: |M| \geq j\}$ be the set of maximal cliques in $G$ of size at least $j$.  
For instance, in the threshold graph $G$ in Figure \ref{fig: threshold ex2}, $\mathcal{M}_G$ consists of five cliques: the clique consisting of the left twelve vertices, and $N_G[v_i]$ for $i \in [4]$. We have $|\mathcal{K}_{G}^j|=0$ for $j>12$, $|\mathcal{K}_{G}^j|=1$ for $j \in \{ 7,8,\ldots, 12\}$, $|\mathcal{K}_{G}^6|=2$,  $|\mathcal{K}_G^{5}|=3$, and $|\mathcal{K}_G^{j}|=5$ for $j \leq 4$. 
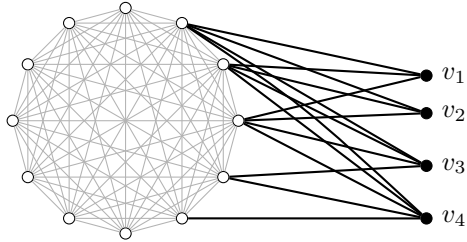
\begin{figure}[H]
\begin{center}
\begin{tikzpicture}

    \foreach \i in {1,...,12} {
        \node[draw=black, fill=white, circle, inner sep=1.5pt] (b\i) at ({2 + 1.5*cos(30*\i)}, {3 + 1.5*sin(30*\i)}) {};
    }

    \foreach \i in {1,...,12} {
        \foreach \j in {1,...,12} {
            \ifnum\i<\j
                \draw[black!30] (b\i) -- (b\j);
            \fi
        }
    }

    \def\xshift{6.0}

    \node[draw=black, fill=black, circle, inner sep=1.5pt, label=right:{$v_{2}$}] (bv1) at (\xshift,3.1) {};
    \node[draw=black, fill=black, circle, inner sep=1.5pt, label=right:{$v_{1}$}] (bv2) at (\xshift,3.6) {};
    \foreach \i in {12,1,2} {
        \draw[black, thick] (bv1) -- (b\i);
        \draw[black, thick] (bv2) -- (b\i);
    }

    \node[draw=black, fill=black, circle, inner sep=1.5pt,label=right:{$v_3$}] (gv) at (\xshift,2.4) {};
    \foreach \i in {11,12,1,2} {
        \draw[black, thick] (gv) -- (b\i);
    }

    \node[draw=black, fill=black, circle, inner sep=1.5pt,label=right:{$v_4$}] (rv) at (\xshift,1.7) {};
    \foreach \i in {10,11,12,1} {
        \draw[black, thick] (rv) -- (b\i);
    }
    \draw[black, thick] (rv) .. controls ($(rv)!0.5!(b2) + (0,0.2)$) .. (b2);

\end{tikzpicture}
\end{center}
\caption{A threshold graph with five maximal cliques.}
\label{fig: threshold ex2}
\end{figure}

We can now characterize the clique tree pairs of threshold graphs. Unsurprisingly, the characterization is based on a nested subtree condition, analogous to the nested neighborhood condition characterizing threshold graphs.

\begin{theorem}\label{thm: star}
    Let $G$ be a threshold graph with clique number $\omega(G)$, let $T$ be a tree, and let $\sigma: V(T) \rightarrow \mathcal{M}_G$ be a bijection. 
    For positive integer $j \in [\omega(G)]$, let $T_j$ denote the subgraph of $T$ induced by the nodes in $\{ \sigma^{-1}(M): M \in \mathcal{K}_G^j\}$.
    
    Then $(T, \sigma)$ is a clique tree pair of $G$ if and only if for each $j \in \{ 1, \ldots, \omega(G)\}$, $T_j$ is a subtree of $T$ and moreover for $j \in [\omega(G)-1]$, $T_{j+1}$ is a subtree of $T_{j}$.
\end{theorem}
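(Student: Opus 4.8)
The characterization has the flavor of "a bijection is a clique tree pair iff every vertex's support is connected," and for threshold graphs the supports $\{\sigma^{-1}(M) : v \in M\}$ are governed entirely by the sizes of the maximal cliques. The key structural fact I would establish first is that in a threshold graph, the maximal cliques are \emph{linearly ordered by the "up-to-size" relation}: if one arranges the threshold construction sequence, each maximal clique $M$ equals $N_G[v]$ for its last-added vertex $v$ (or the final clique of the construction), and for any vertex $w$, the set of maximal cliques containing $w$ is exactly $\{M \in \mathcal{M}_G : |M| \ge j_w\}$ for a threshold $j_w$ depending only on $w$ — more precisely, $w \in M$ iff $M$ is one of the $|\mathcal{K}_G^{j_w}|$ largest cliques, where $j_w$ is determined by when $w$ is "dominated." I would prove this by induction on the threshold construction (adding isolated/dominating vertices), tracking how $\mathcal{M}_G$ and the containment pattern evolve. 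The upshot is: $\{M : w \in M\} = \mathcal{K}_G^{j_w}$ for some $j_w \in [\omega(G)]$, and conversely every $\mathcal{K}_G^j$ arises this way for $j$ in the relevant range.

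**Forward direction.** Suppose $(T,\sigma)$ is a clique tree pair. For each $j \in [\omega(G)]$, pick a vertex $w$ with $j_w = j$ — this exists because the distinct values of $j_w$ are exactly the "jump points" of the function $j \mapsto |\mathcal{K}_G^j|$, which (by the nesting $\mathcal{K}_G^{j+1} \subseteq \mathcal{K}_G^j$) include enough values; if for a particular $j$ no vertex has $j_w = j$ then $\mathcal{K}_G^j = \mathcal{K}_G^{j'}$ for some nearby $j'$ that is realized, and the subtree claim for $T_j$ follows from that for $T_{j'}$. So in all cases $\{\sigma^{-1}(M) : M \in \mathcal{K}_G^j\}$ is either the support of some vertex (hence a subtree, by the clique tree axiom) or equals such a support, giving that $T_j$ is a subtree. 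The nesting $T_{j+1} \subseteq T_j$ is immediate from $\mathcal{K}_G^{j+1} \subseteq \mathcal{K}_G^j$ at the level of node sets; being a subtree of $T_j$ rather than merely of $T$ is automatic once both are subtrees of $T$ and one node set contains the other (a connected subgraph of a tree contained in another connected subgraph is a subtree of the latter).

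**Backward direction.** Conversely, assume each $T_j$ is a subtree of $T$ with $T_{j+1} \subseteq T_j$. To verify the clique tree axiom, fix $v \in V(G)$; by the structural fact, the node set $\{u \in V(T) : v \in \sigma(u)\}$ equals $\{\sigma^{-1}(M) : M \in \mathcal{K}_G^{j_v}\}$, which is precisely the node set of $T_{j_v}$, a subtree by hypothesis. Hence $(T,\sigma)$ is a clique tree pair. I would also double-check the degenerate edge cases ($G$ complete, so $\mathcal{M}_G$ is a singleton and every tree on one node works; or $G$ with isolated vertices contributing trivial cliques — but a threshold graph on $\ge 2$ vertices with an edge has its isolated-vertex cliques being singletons, and one should confirm how "maximal clique" handles isolated vertices, i.e. a singleton $\{v\}$ is a maximal clique iff $v$ is isolated).

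**Main obstacle.** The crux is the structural lemma that $w \in M \iff M \in \mathcal{K}_G^{j_w}$, i.e., that "membership of $w$ in a maximal clique depends only on that clique's size." This is where threshold-ness is really used, and the induction on the construction sequence needs care: adding a dominating vertex $v$ replaces the maximal clique structure in a way that increments sizes and adds $v$ to all the "current" maximal cliques, while adding an isolated vertex creates a new singleton maximal clique and may or may not destroy old ones. Getting the bookkeeping right — especially handling repeated values of $j_w$ and the boundary behavior of $\mathcal{K}_G^j$ as $j$ ranges over $[\omega(G)]$ — is the part that demands the most care; everything after that is a short deduction from the clique tree axiom.
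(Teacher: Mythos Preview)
Your overall strategy matches the paper's: identify, for suitable vertices $w$, that the set of maximal cliques containing $w$ is exactly $\mathcal{K}_G^{j}$ for some $j$, then read off both directions from the clique-tree axiom. However, your central structural lemma is stated too strongly and is \emph{false} as written. You claim that for \emph{every} vertex $w$, the set $\{M \in \mathcal{M}_G : w \in M\}$ equals $\mathcal{K}_G^{j_w}$ for some $j_w \in [\omega(G)]$. This fails whenever $w$ lies in the independent-set part $I$ of the threshold split $(K,I)$: such a $w$ belongs to exactly one maximal clique, namely $N_G[w]$, and since the maximum clique $K$ satisfies $|K| \geq |N_G[w]|$ with $K \neq N_G[w]$, we have $K \in \mathcal{K}_G^j$ for every $j \leq |N_G[w]|$ while $N_G[w] \notin \mathcal{K}_G^j$ for every $j > |N_G[w]|$; hence the singleton $\{N_G[w]\}$ is never of the form $\mathcal{K}_G^j$. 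Your backward direction, as written (``the node set \ldots\ equals \ldots\ the node set of $T_{j_v}$''), therefore breaks for these vertices.

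The repair is straightforward and is exactly what the paper does: restrict the structural claim to vertices $w$ in the clique part (more precisely, to $w \in \bigcup_i N_G(v_i)$), for which your statement is correct and suffices to drive the forward direction. In the backward direction, handle $v \in I$ and $v \in K \setminus \bigcup_i N_G(v_i)$ separately by noting that each such $v$ lies in a single maximal clique, so the induced subgraph of $T$ is a single node, trivially a subtree. The paper establishes the $K$-vertex version directly from the nested-neighborhood description (choosing witnesses $z_i \in N_i \setminus N_{i-1}$) rather than by your proposed induction on the construction sequence; the direct argument is cleaner and avoids the bookkeeping you anticipate.
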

\begin{proof}
If $G$ is isomorphic to a complete graph, then $|\mathcal{M}_G|=1$ and so the theorem statement trivially holds.
    Now assume that $G$ is a threshold graph that is not isomorphic to a complete graph. Let $T$ be a tree and let $\sigma: V(T) \rightarrow \mathcal{M}_G$ be a bijection.
    Letting $K$ denote a clique in $G$ of size $\omega(G)$ and with $I=V(G) \setminus K$, which is nonempty since $G$ is not isomorphic to a complete graph, we have that $G$ is $(K,I)$-split and the vertices of $I$ can be listed as $v_1, \ldots, v_{|I|}$ so that for each $i \in \{ 1,2, \ldots, |I|-1\}$ we have $N_G(v_i) \subseteq N_G(v_{i+1}) \subsetneq K$.
    Hence, $\mathcal{M}_G$ consists of the clique $K$ and the closed neighborhoods $N_G[v_i]$ for each $i \in [|I|]$. 


    Let $m$ denote the number of distinct elements of the collection $\{ N_G(v_1), \ldots, N_G(v_{|I|}) \}$, so $m \leq |I|$. Let $(N_1, \ldots, N_{m})$ be the $m$-tuple consisting of the $m$ distinct elements of $\{ N_G(v_1), \ldots, N_G(v_{|I|}) \}$, arranged in increasing order of size. Thus, we have  $N_1 \subsetneq  \ldots \subsetneq N_m \subsetneq K$.
    
    Let $z_1$ be any vertex in $N_1$ and for each $i \in \{2,\ldots, m\}$, let $z_i \in K$ be any vertex in $N_i \setminus N_{i-1}$. Then for any $i \in [m]$, we have $|N_i|<\omega(G)$ and a maximal clique $M \in \mathcal{M}_G$ contains $z_i$ if and only if $M \in \mathcal{K}_G^{|N_i|+1}$.

    First suppose that $(T, \sigma)$ is a clique tree pair of $G$. Then each $T_{z_i}$ is a subtree of $T$, so for each $j \in [\omega(G)-1]$, we have that $T_j$ is a subtree of $T$. Moreover $T_{\omega(G)}$ is a subtree of $T$ because every maximum clique in $G$ intersects $K$. Hence, for any $j \in [\omega(G)]$, $T_j$ is a subtree of $T$. 
    Moreover, for $j \in [\omega(G)-1]$ we have $\mathcal{K}_G^j = \mathcal{K}^{j+1}_G \cup \{ M \in \mathcal{M}_G: |M|=j\}$, so $V(\mathcal{K}^{j+1}_G) \subseteq V(\mathcal{K}^j_G)$. Since $T_{j+1}$ and $T_j$ are both subtrees of $T$, it follows that $T_{j+1}$ is a subtree of $T_j$.

    Conversely, suppose that $T$ is a tree, $\sigma:V(T)\rightarrow \mathcal{M}_G$ is a bijection, $T_j$ is a subtree of $T$ for any $j \in [\omega(G)]$, and for $j \in [\omega(G)-1]$ we have that $T_{j+1}$ is a subtree of $T_{j}$. Fix $v \in V(G)$ and let $H_v$ denote the subgraph of $T$ induced by the nodes in $\{ \sigma^{-1}(M):v \in M \}$. 
    We will show that $H_v$ is a subtree of $T$. If $v \notin K$, there is exactly one maximal clique in $G$ containing $v$, namely $N_G[v]$. Similarly, if $v\in K \setminus \bigcup_{i=1}^{|I|}N_G(v_i)$, the vertex $v$ is in only one maximal clique in $G$, namely $K$. In these two cases, $|V(H_v)|=1$ so $H_v$ is a subtree of $T$. Finally, if $v \in \bigcup_{i=1}^{|I|}N_G(v_i)$, let $\ell \in [m]$ be the smallest index where $v \in N_{\ell}$, so for any $M \in \mathcal{M}_G$, we have $v \in M$ if and only if $M \in \mathcal{K}_G^{|N_{\ell}|+1}$. In other words, we have $H_v=T_{j}$ where $j=|N_{\ell}|+1$. By the assumption that $T_j$ is a subtree of $T$, it follows that $H_v$ is a subtree of $T$. Hence, $(T, \sigma)$ is a clique tree pair of $G$.
\end{proof}

As a corollary of Theorem \ref{thm: star}, we can prove that threshold graphs are ``universal" in the sense that any threshold graph $G$ has any tree on $|\mathcal{M}_G|$ nodes as a clique tree. We say that a chordal graph $G$ is \textit{clique tree universal} if any tree on $|\mathcal{M}_G|$ nodes is a clique tree for $G$.

\begin{corollary} \label{cor: threshold clique tree} Every threshold graph is clique tree universal.
\end{corollary}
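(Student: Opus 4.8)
The plan is to deduce the corollary directly from Theorem~\ref{thm: star} by verifying that an \emph{arbitrary} tree $T$ on $|\mathcal{M}_G|$ nodes, equipped with an \emph{arbitrary} bijection $\sigma\colon V(T)\to\mathcal{M}_G$, satisfies the nested-subtree condition in that theorem. If $G$ is complete, then $|\mathcal{M}_G|=1$ and there is nothing to check, so I assume $G$ is a threshold graph that is not complete and adopt the notation from the proof of Theorem~\ref{thm: star}: a maximum clique $K$, the independent set $I=V(G)\setminus K$ with $N_G(v_1)\subseteq\cdots\subseteq N_G(v_{|I|})\subsetneq K$, and the resulting description $\mathcal{M}_G=\{K\}\cup\{N_G[v_i]:i\in[|I|]\}$. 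For each $j\in[\omega(G)]$ let $T_j$ be the subgraph of $T$ induced by $\{\sigma^{-1}(M):M\in\mathcal{K}_G^j\}$, exactly as in Theorem~\ref{thm: star}.

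The first key step is to observe that the sets $\mathcal{K}_G^j$ form a \emph{decreasing chain} under inclusion: since $\mathcal{K}_G^j=\{M\in\mathcal{M}_G:|M|\ge j\}$, we have $\mathcal{K}_G^{\omega(G)}\subseteq\mathcal{K}_G^{\omega(G)-1}\subseteq\cdots\subseteq\mathcal{K}_G^1=\mathcal{M}_G$, and consequently $V(T_{\omega(G)})\subseteq\cdots\subseteq V(T_1)=V(T)$. By Theorem~\ref{thm: star}, to show $(T,\sigma)$ is a clique tree pair it therefore suffices to show that each $T_j$ is connected (a subtree of $T$); the additional requirement that $T_{j+1}$ be a subtree of $T_j$ then follows automatically from connectivity of both together with the vertex-set inclusion $V(T_{j+1})\subseteq V(T_j)$, since a connected subgraph contained in another connected subgraph of a tree is a subtree of it.

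The heart of the argument is thus: for every $j$, the induced subgraph $T_j$ of $T$ is connected, \emph{regardless of which bijection $\sigma$ we chose}. This holds because $\mathcal{K}_G^j$ is \emph{all} of $\mathcal{M}_G$, or a set of cliques all containing a common vertex, or empty. Concretely, if $j\le |N_1|$ where $N_1$ is the smallest among the $N_G(v_i)$, then every maximal clique of $G$ has size $\ge j$ (each $N_G[v_i]$ has size $\ge|N_1|+1>j$ and $|K|=\omega(G)\ge j$), so $\mathcal{K}_G^j=\mathcal{M}_G$ and $T_j=T$ is connected. If $j>\omega(G)$ it is empty. Otherwise, pick a vertex $z$ of $G$ witnessing size level $j$ in the sense used in the proof of Theorem~\ref{thm: star}: a vertex lying in $N_G(v_i)$ for the appropriate $i$ but not in any strictly smaller neighborhood, so that a maximal clique $M$ contains $z$ exactly when $|M|\ge j$. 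Then $\{\sigma^{-1}(M):M\in\mathcal{K}_G^j\}=\{\sigma^{-1}(M):z\in M\}$, which is nonempty. But now we are free: since $T$ is arbitrary and $\sigma$ is \emph{any} bijection, we cannot claim this vertex set induces a subtree for a fixed $\sigma$—so instead I should run the quantifiers the right way. Given an arbitrary tree $T$ on $|\mathcal{M}_G|$ nodes, I must produce \emph{some} bijection making it a clique tree, but the corollary asserts more: \emph{every} tree is a clique tree, which by the remark after the definition of clique tree means there exists a bijection; so it suffices to exhibit one good $\sigma$ for each $T$.

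Therefore the real plan is: given an arbitrary tree $T$ on $n=|\mathcal{M}_G|$ nodes, construct a bijection $\sigma$ and verify the Theorem~\ref{thm: star} condition for it. Order the maximal cliques of $G$ by size into a chain $M_{(1)},\dots,M_{(n)}$ (ties broken arbitrarily) so that $|M_{(1)}|\le\cdots\le|M_{(n)}|$; note $M_{(n)}=K$. Choose any leaf ordering / nested structure of $T$: since $T$ is a tree on $n$ nodes, I can enumerate its vertices $u_1,\dots,u_n$ so that each initial segment $\{u_1,\dots,u_k\}$ and each terminal segment $\{u_k,\dots,u_n\}$ induces a subtree — concretely, fix a root, let $u_n$ be the root, and order so that $u_1,\dots,u_n$ is a reverse BFS/DFS order; then every terminal segment $\{u_k,\dots,u_n\}$ is connected. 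Set $\sigma(u_k)=M_{(k)}$. Because $\mathcal{K}_G^j$ always consists of the largest cliques — precisely the top segment of the size-chain — the node set $\{\sigma^{-1}(M):M\in\mathcal{K}_G^j\}$ is a terminal segment $\{u_k,\dots,u_n\}$ of our ordering, hence induces a subtree $T_j$ of $T$; and since these terminal segments are nested as $j$ increases, $T_{j+1}$ is a subtree of $T_j$. Theorem~\ref{thm: star} then gives that $(T,\sigma)$ is a clique tree pair, so $T$ is a clique tree of $G$. As $T$ was an arbitrary tree on $|\mathcal{M}_G|$ nodes, $G$ is clique tree universal.

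The main obstacle, and the only point requiring care, is the combinatorial claim that any tree on $n$ nodes admits a vertex ordering whose terminal segments are all connected — this is the standard fact that rooting a tree and listing vertices so that every vertex precedes its parent (e.g. a postorder) makes every "ancestor-closed" suffix a subtree; I would state and use it as an elementary lemma. Everything else reduces to the observation that $\mathcal{K}_G^j$ is a top segment of the cliques ordered by size, which is immediate from the split-plus-nested-neighborhoods structure of threshold graphs recalled in the proof of Theorem~\ref{thm: star}.
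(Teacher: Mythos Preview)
Your final argument is correct and is essentially the paper's proof: the paper picks a root $r$, sets $\sigma(r)=K$, and then assigns the remaining maximal cliques in decreasing order of size via breadth-first search from $r$, which is precisely your ``ordering with connected terminal segments'' construction specialized to reverse BFS. The initial detour attempting to show that \emph{every} bijection works is unnecessary but harmless, since you correctly abandon it once you recall that clique tree universality only requires the existence of one good $\sigma$ per tree.
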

\begin{proof}
Suppose $G$ is a threshold graph whose vertices can be partitioned into a clique $K$ and independent set $I$ so that $I$ has the nested neighborhood property in $G$. 
    Given a tree $T$ on $|\mathcal{M}_G|$ nodes, let $r$ be an arbitrary node of $T$. We will construct a bijection $\sigma: V(T) \rightarrow \mathcal{M}_G$. First, set $\sigma(r)=K$. Then, perform a breadth-first search from $r$, and upon encountering a new node $u$ of $T$, set $\sigma(u)=M$ where $M \in \mathcal{M}_G$ is any element of $\mathcal{M}_G$ of maximum size among the elements of $\mathcal{M}_G$ that have not already been assigned to a node of $T$ under $\sigma$. It is straightforward to verify that this process produces a bijection $\sigma: V(T) \rightarrow \mathcal{M}_G$ satisfying the conditions of Theorem \ref{thm: star}, so $T$ is a clique tree of $G$.
\end{proof}

Corollary \ref{cor: threshold clique tree} extends the corollary of McMorris and Shier's result that any threshold graph has a star graph as a clique tree. The clique tree universality property of threshold graphs is interesting because it does not hold for general chordal graphs, or even for general split graphs. For instance, the split graph $G$ in Figure \ref{fig: split ex} has $|\mathcal{M}_G|=4$ but does not have the path on four vertices as a clique tree.

\begin{figure}[H]
\begin{center}
\begin{tikzpicture}

    \node[draw=black, fill=white, circle, inner sep=1.5pt] (v1) at (0,1.2) {};
    \node[draw=black, fill=white, circle, inner sep=1.5pt] (v2) at (-0.6,0) {};
    \node[draw=black, fill=white, circle, inner sep=1.5pt] (v3) at (0.6,0) {};

    \draw[black] (v1) -- (v2);
    \draw[black] (v1) -- (v3);
    \draw[black] (v2) -- (v3);

    \node[draw=black, fill=black, circle, inner sep=1.5pt] (u1) at (3,1.2) {};
    \node[draw=black, fill=black, circle, inner sep=1.5pt] (u2) at (3,0.6) {};
    \node[draw=black, fill=black, circle, inner sep=1.5pt] (u3) at (3,0.0) {};

    \draw[black, thick] (u1) -- (v1);
    \draw[black, thick] (u2) -- (v1);
    \draw[black, thick] (u2) -- (v3);
    \draw[black, thick] (u3) -- (v3);

    \draw[black, thick] 
        (u1) .. controls (-0.6,1.8)  .. (v2);

    \draw[black, thick] 
        (u3) .. controls (-0.6,-0.6) .. (v2);

\end{tikzpicture}
\end{center}
\caption{A split graph that is not a threshold graph and is not clique tree universal.}
\label{fig: split ex}
\end{figure}
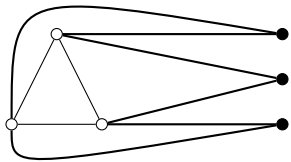

It is natural to ask whether threshold graphs are the only clique tree universal chordal graphs, but this is not the case. For example, if one removes an edge incident to a vertex of degree two in the graph from Figure \ref{fig: split ex}, the resulting graph is not threshold but is clique tree universal. An interesting direction for future work would be to characterize the clique tree universal chordal graphs in general.

\section*{Acknowledgments}
The author thanks the referees for valuable comments and for suggesting to apply Theorem \ref{thm: conditions} to chordal graphs. The author is also grateful to Xingxing Yu for helpful feedback throughout the research process.

\end{document}